\numberwithin{equation}{section}
\theoremstyle{plain}
\newtheorem{thm}{Theorem}[section]
\newtheorem{lem}[thm]{Lemma}
\newtheorem{prop}[thm]{Proposition}
\newtheorem{cor}[thm]{Corollary}
\newtheorem{rem}[thm]{Remark}
\theoremstyle{definition}
\def\R{\mathbb{R}}
\def\N{\mathbb{N}}
\def\e{\epsilon}
\def\g{\gamma}
\def\d{\delta}
\def\D{\Delta}
\def\G{\Gamma}
\def\a{\alpha}
\def\P{\mathcal{P}}
\def\k{\kappa}
\DeclareMathOperator{\diam}{diam}
\DeclareMathOperator{\dist}{dist}
\begin{document}

\title[Bi-Lipschitz embedding of the generalized Grushin plane]{Bi-Lipschitz embedding of the generalized Grushin plane into Euclidean spaces}
\date{\today}
\author{Matthew Romney}
\address{Department of Mathematics , University of Illinois at Urbana-Champaign, Urbana, IL 61801, USA}
\email{romney2@illinois.edu}
\author{Vyron Vellis}
\address{Department of Mathematics and Statistics, P.O. Box 35 (MaD), FI-40014 University of Jyv\"askyl\"a, Jyv\"askyl\"a, Finland}
\email{vyron.v.vellis@jyu.fi}
\thanks{The second author was supported in part by the Academy of Finland project 257482.}
\subjclass[2010]{Primary 53C17; Secondary 30L05, 30L10}
\keywords{generalized Grushin plane, quasiplane, bi-Lipschitz embedding}

\begin{abstract}
We show that, for all $\a\geq 0$, the generalized Grushin plane $\mathbb{G}_{\a}$ is bi-Lipschitz homeomorphic to a $2$-dimensional quasiplane in the Euclidean space $\R^{[\a ]+2}$, where 
$[\a]$ is the integer part of $\a$. The target dimension is sharp.  This generalizes a recent result of Wu \cite{Wu}.  
\end{abstract}

\maketitle
\date{\today}

\section{Introduction}

The classical Grushin plane $\mathbb{G}$ is defined as the space $\R^2$ equipped with the sub-Riemannian (Carnot-Carath\'eodory) metric $d_\mathbb{G}$ generated by the vector fields
\[X_1 = \partial_{x_1}  \quad \text{ and } \quad X_2 = x_1\partial_{x_2}.\]
This means more precisely that the distance between points $p,q\in\mathbb{G}$ is
\[ d_{\mathbb{G}}(p,q) = \inf_{\g}  \int_0^1  \sqrt{x_1'(t)^2 + \frac{x_2'(t)^2}{x_1(t)^2}} dt, \]
where the infimum is taken over all paths $\gamma = (x_1(t),x_2(t))\colon [0,1] \to \mathbb{G}$, with $\g(0) = p$ and $\g(1) = q$, that are absolutely continuous in the Euclidean metric. The Grushin plane is one of the simplest examples of a sub-Riemannian manifold, as well as a basic example of the \textit{almost Riemannian} manifolds studied by Agrachev, Boscain, Charlot, Ghezzi, and Sigalotti \cite{ABCGS}, \cite{ABS}. For additional background on the Grushin plane and sub-Riemannian spaces in general, see Bella\"iche \cite{Bellaiche}.  

Recently, Seo \cite{Seo} proved a general characterization of spaces admitting a bi-Lipschitz embedding into some Euclidean space $\R^n$, from which it follows that $\mathbb{G}$ admits such an embedding. In contrast, the Heisenberg group can not be embedded bi-Lipschitz in any Euclidean space \cite{Semmes}. While Seo's result does not give the optimal target dimension, Wu \cite{Wu} constructed an explicit bi-Lipschitz embedding of $\mathbb{G}$ into $\R^3$, where the dimension 3 is the smallest possible. 

In the present article, Wu's result is extended to the generalized Grushin plane $\mathbb{G}_\a$, $\a\geq 0$, studied first by Franchi and Lanconelli \cite{FrLa}. Similarly to $d_\mathbb{G}$, the metric $d_{\mathbb{G}_{\a}}$ is generated by the vector fields
\[X_1 = \partial_{x_1}  \quad \text{ and } \quad X_2 = |x_1|^{\a}\partial_{x_2}.\]
For integer values of $\a$, $|x_1|^{\a}$ can be replaced by $x_1^{\a}$ and the space $\mathbb{G}_\a$ is a sub-Riemannian manifold of step $\a+1$. For noninteger values of $\a$, this space is technically not sub-Riemannian, but this distinction does not matter for the purposes of this paper. Meyerson \cite{Meyerson} and Ackermann \cite{Ack} have shown that $\mathbb{G}_\a $ is quasisymmetric to the Euclidean space $\R^2$ for any $\a \geq 0$. Moreover, it can be deduced by Seo's theorem \cite[Theorem 4.4]{Seo} that $\mathbb{G_\a}$ is bi-Lipschitz embeddable into some Euclidean space when $\a>0$, though without identifying the smallest target dimension. 

In this paper, we construct for each $\a\geq 0$ a bi-Lipschitz embedding of $\mathbb{G}_\a $ into $\R^{[\a ]+2}$ where $[\a]$ is the greatest integer that is less or equal to $\a$. A point of interest in both Wu's and our construction is that the image of $\mathbb{G}_\a$ is a quasiplane in $\R^{[\a]+2}$. 

\begin{thm}\label{thm:main}
For all integers $N\geq 0$ and $n \geq 1$, there exists $L>1$ depending only on $N,n$ such that for any $\a\in [N,N+\frac{n-1}n]$, there exists an $L$-bi-Lipschitz homeomorphism of $\mathbb{G}_{\a}$ onto a $2$-dimensional quasiplane $\mathcal{P}_{\a}$ in $\R^{N+2}$.
\end{thm}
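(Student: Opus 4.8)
The plan is to build the embedding explicitly from a Whitney-type decomposition of $\R^{2}$ adapted to the singular line $L=\{x_1=0\}$, using the homothety $\delta_{\lambda}(x_1,x_2)=(\lambda x_1,\lambda^{\a+1}x_2)$, which multiplies $d_{\mathbb{G}_{\a}}$ by $\lambda$ and under which the entire picture is self-similar. First I would fix the standard two-sided comparison $d_{\mathbb{G}_{\a}}((x_1,x_2),(y_1,y_2))\asymp|x_1-y_1|+|x_2-y_2|/\max\{|x_1|,|y_1|,|x_2-y_2|^{1/(\a+1)}\}^{\a}$, with constants depending only on $\a$, hence only on $N$ when $\a\in[N,N+1)$. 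Using this I decompose $\{x_1>0\}$ into boxes $Q^{+}_{k,j}=\{2^{-k-1}<x_1\le 2^{-k}\}\times\{j2^{-(\a+1)k}\le x_2<(j+1)2^{-(\a+1)k}\}$ for $k,j\in\Z$, and symmetrically $Q^{-}_{k,j}$ in $\{x_1<0\}$: each box has $d_{\mathbb{G}_{\a}}$-diameter $\asymp 2^{-k}$, lies at $d_{\mathbb{G}_{\a}}$-distance $\asymp 2^{-k}$ from a sub-arc of $L$ of $d_{\mathbb{G}_{\a}}$-diameter $\asymp 2^{-k}$, and, rescaled by $\delta_{2^{k}}$, is $C(N)$-bi-Lipschitz to a fixed rectangle. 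I organize the boxes into a tree whose children of $Q^{\pm}_{k,j}$ are the $\asymp 2^{\a+1}$ level-$(k{+}1)$ boxes lying directly below it toward $L$; these children are linearly ordered so the $x_2$-footprint is traversed once, and a child at distance $\ell$ from another along this order is at $d_{\mathbb{G}_{\a}}$-distance $\asymp\ell^{1/(\a+1)}2^{-k-1}$ from it, for $1\le\ell\lesssim 2^{\a+1}$.

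Next I would define $F\colon\R^{2}\to\R^{N+2}$ box by box: on $Q^{\pm}_{k,j}$, $F$ is, up to distortion bounded by $C(N,n)$, an orientation-preserving similarity of scale $\asymp 2^{-k}$, chosen so that (i) images of boxes sharing an edge agree along that edge; (ii) the image of a box of scale $r$ lies within $\asymp r$ of the image of the sub-arc of $L$ it faces; and (iii) the images of the children of a box of scale $r$ form a self-avoiding chain of blobs of scale $\asymp r/2$, with consecutive blobs adjacent and with the $\ell$-th and $m$-th blobs at mutual distance $\asymp|\ell-m|^{1/(\a+1)}(r/2)$. Iterating (iii) down a branch forces the image of $L$ to be a bi-Lipschitz copy of the snowflake $(L,d_{\mathbb{G}_{\a}}|_{L})\cong(\R,|s-t|^{1/(\a+1)})$, glued to the images of the two half-planes along it. Realizing (iii) amounts to a bi-Lipschitz embedding of $(\{1,\dots,M_{\a}\},|\ell-m|^{1/(\a+1)})$, $M_{\a}\asymp 2^{\a+1}$, into $\R^{N+2}$ with uniform control: since $\a+1<N+2$ this is possible at all, and since $\a+1\le N+2-\tfrac1n$ keeps $\a+1$ bounded away from $N+2$, the distortion can be taken to depend only on $N$ and $n$ — compare the snowflake embedding results of Rohde and \Vaisala\ recalled above. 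The target dimension $N+2=\lceil\a+1\rceil$ is the smallest in which this is possible, which is the source of sharpness.

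The hard part will be step (iii): producing, for every $\a\in[N,N+\tfrac{n-1}{n}]$, these corkscrew chains in $\R^{N+2}$ that are self-avoiding with separation constants depending only on $N,n$, mutually consistent across all scales $k$ so that the piecewise $F$ is globally well defined and injective, and carried out on both sides of $L$ simultaneously — in effect a two-sided, scale-coherent Assouad-type embedding of the one-dimensional snowflake of exponent $1/(\a+1)$, with the deterioration of constants as $\a\to N+1$ (equivalently $M_{\a}\to 2^{N+2}$) quantified by $n$. Granting such an $F$, the bi-Lipschitz estimate reduces via $\delta_{\lambda}$ to three regimes — points in the same or adjacent boxes (immediate from (i) and the rescaled bi-Lipschitz bound on a box), points in boxes of comparable scale (controlled by the separation in (iii) along their common ancestor), and points in boxes of very different scale (a telescoping sum up the tree, using (ii)) — each yielding a bound with $L=L(N,n)$.

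Finally, to identify $\P_{\a}:=F(\mathbb{G}_{\a})$ as a quasiplane: $\P_{\a}$ is a topological plane, and composing $F^{-1}$ with the Meyerson--Ackermann quasisymmetry $\R^{2}\to\mathbb{G}_{\a}$ \cite{Meyerson,Ack} gives a quasisymmetric embedding $\R^{2}\hookrightarrow\R^{N+2}$; for $N=0$ this is onto (hence a quasiconformal homeomorphism of $\R^{2}$, so $\P_{0}=\R^{2}$), and for $N\ge1$ it extends to a quasiconformal self-homeomorphism of $\R^{N+2}$ by a standard quasiconformal extension theorem. Arranging the chains in (iii) to be self-similar generalized von Koch curves makes this extension step transparent — in particular in the codimension-one case $N=1$, as in Wu's treatment of $\a=1$ \cite{Wu} — so that $\P_{\a}$ is a $2$-dimensional quasiplane in $\R^{N+2}$, as claimed.
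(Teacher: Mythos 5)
Your outline correctly isolates the two pieces of the problem — a scale-coherent ``snowflake with half-planes attached'' construction, plus an identification of the image as a quasiplane — but at both points it leaves a genuine gap rather than a proof. The heart of the matter is exactly your step (iii), and reducing it to ``a bi-Lipschitz embedding of $(\{1,\dots,M_{\a}\},|\ell-m|^{1/(1+\a)})$ into $\R^{N+2}$'' understates what is needed: the chains must be nested consistently across all scales $k$, globally self-avoiding (so that $F$ is injective when the two half-planes are glued in on both sides of the singular line), and, crucially, their distortion constants must depend only on $N$ and $n$, uniformly over all $\a\in[N,N+\frac{n-1}{n}]$. Citing Assouad/Rohde/V\"ais\"al\"a gives existence of snowflake embeddings for each fixed exponent, but not this uniformity, and the uniformity is not a technicality: it is what makes the statement true with a single $L(N,n)$ and what any limiting argument (e.g.\ for irrational $\a$) would have to rest on. In the paper this is achieved by a specific device — two families of core-replacement maps $\Theta^{\mathcal{F}}_{0}$ and $\Theta^{\mathcal{F}}_{p}$, alternated periodically according to the elementary balancing Lemma \ref{lem:sum}, together with an explicit construction of the unknotted paths $J_I,J_L$ and a bi-Lipschitz extension argument (Lemma \ref{lem:BLext}, via V\"ais\"al\"a's extension theorem) — and a ``straightforward'' single-family iteration is explicitly noted there to lose control of the constant. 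Your proposal contains no mechanism playing this role, so the key quantitative claim is asserted, not proved.

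The second gap is the quasiplane identification. You build $F$ only on $\mathbb{G}_{\a}$ (equivalently on the $2$-plane) and then invoke ``a standard quasiconformal extension theorem'' to extend the resulting quasisymmetric embedding $\R^{2}\hookrightarrow\R^{N+2}$ to a quasiconformal self-homeomorphism of $\R^{N+2}$. No such standard theorem exists in codimension $\ge 2$ (and even in codimension $1$ extension of quasisymmetric embeddings is delicate and dimension-dependent); being extendable is precisely the definition of the image being a quasiplane, so this step is circular as stated. The paper sidesteps the issue by reversing the logic: it first constructs a global quasiconformal map $F_{\a}:\R^{N+2}\to\R^{N+2}$ (by forward iteration inside a block and backward iteration using the periodicity available for rational $\a$, then a compactness argument for irrational $\a$), so that $\mathcal{P}_{\a}=F_{\a}(\R\times\{0\}\times\R)$ is a quasiplane by construction, and only afterwards checks that $F_{\a}\circ H_{\a}$ is bi-Lipschitz on $\mathbb{G}_{\a}$. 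To repair your argument you would either have to carry out your chain construction as the restriction of an explicit global quasiconformal self-map (which is essentially the paper's route), or supply an actual extension theorem valid in codimension $N$, which you do not have.
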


A \emph{$k$-dimensional quasiplane} $\P$ in $\R^n$, with $k<n$, is the image of a $k$-dimensional hyperplane in $\R^n$ under a quasiconformal self-map of $\R^n$. Complete characterizations of these spaces in terms of their geometric structure exist only for $n=2$, $k=1$ by Ahlfors \cite{Ah}. While such intrinsic characterizations have been elusive for $n\geq 3$, several intriguing examples of quasiplanes and quasispheres have been constructed \cite{Bishop, DToro, Lewis, Meyer, PW, VW2, Wu}. 


A couple of remarks are in order. The target dimension $N+2 = [\a ]+2$ in Theorem \ref{thm:main} is minimal. Indeed, by (\ref{eq:grushinQuasimetric}), the \emph{singular line} $\{x_1=0\}$ of $\mathbb{G}_{\a}$ is bi-Lipschitz homeomorphic to the ``snowflaked'' space $(\R,|\cdot| ^{1/(1+\a)})$ which, by a well-known theorem of Assouad \cite[Proposition 4.12]{Assouad}, embeds bi-Lipschitz into $\R^{[\a]+2}$ with the target dimension $[\a]+2$ being the smallest possible when $\alpha>0$. It is noteworthy that, for $\alpha>0$, $\mathbb{G}_\a$ embeds in the same Euclidean space that its singular line embeds in.


The same result of Assouad also justifies the dependence of the constant $L$ on $n$. For if there was a uniform $L$ such that $\mathbb{G}_\a$ was $L$-bi-Lipschitz embeddable in $\R^{N+2}$ for all $\a \in [N,N+1)$, then by a simple Arzel\`a-Ascoli limiting argument (see Lemma \ref{lem:BLembed}), it would follow that $\mathbb{G}_{N+1}$, thus the singular line in $\mathbb{G}_{N+1}$, is also embeddable in $\R^{N+2}$ which is false. 

The following corollary is an immediate consequence of Theorem \ref{thm:main}.

\begin{cor}\label{cor:BLcor}
If $\a\in [0,1)$ then $\mathbb{G}_{\a}$ is bi-Lipschitz homeomorphic to $\mathbb{R}^2$.
\end{cor}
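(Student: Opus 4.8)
The plan is to deduce this directly from Theorem \ref{thm:main}. Fix $\a \in [0,1)$. Since $[\a] = 0$, take $N = 0$; and since $\a < 1$ we may pick an integer $n \geq 1$ large enough that $\a \leq \frac{n-1}{n}$ (any $n \geq (1-\a)^{-1}$ works), so that $\a \in [N, N + \frac{n-1}{n}]$. Theorem \ref{thm:main} then furnishes an $L$-bi-Lipschitz homeomorphism $\Phi$ of $\mathbb{G}_\a$ onto a $2$-dimensional quasiplane $\P_\a$ lying in $\R^{N+2} = \R^2$.

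The remaining step is to observe that $\P_\a$ is all of $\R^2$. By definition, $\P_\a$ is the image of a $2$-dimensional hyperplane of $\R^2$ --- necessarily $\R^2$ itself --- under a quasiconformal self-map $f$ of $\R^2$. As $f \colon \R^2 \to \R^2$ is a homeomorphism it is surjective, so $\P_\a = f(\R^2) = \R^2$ as a set, and the metric it inherits from the ambient $\R^{N+2}$ is the usual Euclidean one. Thus $\Phi$ is an $L$-bi-Lipschitz homeomorphism from $\mathbb{G}_\a$ onto the Euclidean plane, which is the assertion of the corollary.

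There is essentially no obstacle here. The one point requiring a moment's care is that the definition of a $k$-dimensional quasiplane in $\R^n$ is formulated for $k < n$, whereas in the present degenerate situation $k = n = 2$ and the notion collapses to the whole plane; correspondingly, one should note that the $N = 0$ instance of the construction underlying Theorem \ref{thm:main} produces a self-homeomorphism of $\R^2$, hence a surjection, so that no proper-subset issue arises. Once this is granted the corollary is immediate.
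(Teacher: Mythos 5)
Your argument is correct and is essentially the paper's own: the paper states the corollary as an immediate consequence of Theorem \ref{thm:main} with $N=0$ (so the target is $\R^2$), exactly as you do, and your observation that the ``$2$-dimensional quasiplane in $\R^2$'' produced by the construction is the image of $\R^2$ under a quasiconformal self-homeomorphism, hence all of $\R^2$, is precisely the degenerate-case remark that makes the deduction immediate. Nothing further is needed.
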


Therefore, $\mathbb{G}_{\a}$ is bi-Lipschitz homeomorphic to $\mathbb{G}_{\beta}$ whenever $\a,\beta \in [0,1)$. In contrast, if $\a \geq 1$ then $\mathbb{G}_\a$ has Hausdorff dimension $\a+1$, and is bi-Lipschitz homeomorphic to $\mathbb{G}_\beta$ only when $\a = \beta$. Combined with the Beurling-Ahlfors quasiconformal extension \cite{BerAhl}, Corollary \ref{cor:BLcor} yields the following result.

\begin{cor}\label{cor:ext}
If $\a\in[0,1)$, then any bi-Lipschitz embedding of the singular line of $\mathbb{G}_{\a}$ into $\R^2$ extends to a bi-Lipschitz homeomorphism of $\mathbb{G}_{\a}$ onto $\R^2$.
\end{cor}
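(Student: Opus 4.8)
The plan is to deduce the statement from Corollary \ref{cor:BLcor} by a cut-and-double argument along the singular line, with the Beurling--Ahlfors extension handling the boundary bookkeeping. Write $\ell=\{x_1=0\}$ for the singular line, which by (\ref{eq:grushinQuasimetric}) is bi-Lipschitz equivalent to the snowflake $S:=(\R,|\cdot|^{1/(1+\a)})$, and write $\mathbb{G}_\a^{+}=\{x_1\ge 0\}$, $\mathbb{G}_\a^{-}=\{x_1\le 0\}$, so that $\mathbb{G}_\a^{+}\cap\mathbb{G}_\a^{-}=\ell$ and the reflection $\rho(x_1,x_2)=(-x_1,x_2)$ is an isometry of $\mathbb{G}_\a$ fixing $\ell$ pointwise and interchanging the two closed half-planes. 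Let $\iota\colon\ell\to\R^2$ be a bi-Lipschitz embedding; its image $\Gamma$ is a properly embedded topological line, so $\R^2\setminus\Gamma$ has exactly two components $\Omega$, $\Omega'$ with $\overline{\Omega}\cap\overline{\Omega'}=\Gamma$. The first reduction is that it suffices to construct a bi-Lipschitz homeomorphism $F\colon\mathbb{G}_\a^{+}\to\overline{\Omega}$ with $F|_\ell=\iota$; indeed the same construction with $\Omega'$ in place of $\Omega$ (its boundary is again $\Gamma$) yields $F'\colon\mathbb{G}_\a^{+}\to\overline{\Omega'}$ with $F'|_\ell=\iota$, and then $\Psi:=F$ on $\mathbb{G}_\a^{+}$ and $\Psi:=F'\circ\rho$ on $\mathbb{G}_\a^{-}$ agree on $\ell$ (both equal $\iota$, since $\rho|_\ell=\mathrm{id}$), hence define a homeomorphism $\mathbb{G}_\a\to\R^2$ that is bi-Lipschitz on each closed half-plane. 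As $\mathbb{G}_\a$ and $\R^2$ are geodesic and any geodesic joining the two sides meets $\ell$, respectively $\Gamma$, the standard gluing lemma upgrades $\Psi$ to a global bi-Lipschitz homeomorphism, and $\Psi|_\ell=\iota$.

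To build $F$ I would pass to a conformal model of the Grushin half-plane. On $\{x_1>0\}$ the Grushin length metric is that of the Riemannian tensor $dx_1^2+x_1^{-2\a}\,dx_2^2$, and the substitution $w=x_1^{1+\a}/(1+\a)$ (leaving $x_2$ unchanged) transforms this tensor into $\lambda(w)^2(dw^2+dx_2^2)$ with $\lambda(w)=\bigl((1+\a)w\bigr)^{-\a/(1+\a)}$. Hence $\mathbb{G}_\a^{+}$ is identified isometrically with the closed right half-plane $H=\{\operatorname{Re}z\ge 0\}$, $z=w+ix_2$, carrying the conformally Euclidean length metric $d_\lambda$ of density $\lambda$, where $\lambda(z)\asymp(\operatorname{Re}z)^{-\a/(1+\a)}$ with constants bounded for $\a\in[0,1)$; under this identification $\ell$ becomes $\partial H=i\R$, bi-Lipschitz equivalent to $S$. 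So it remains to produce a bi-Lipschitz homeomorphism $(H,d_\lambda)\to(\overline{\Omega},|\cdot|)$ whose boundary values, read through $\partial H\cong\ell$, equal $\iota$.

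I would do this in two steps. \emph{(i) Uniformization.} One shows that $(\overline{\Omega},|\cdot|)$ is bi-Lipschitz homeomorphic to $(H,d_\lambda)$ by \emph{some} homeomorphism $F_1$. Here $\Gamma=\partial\Omega$, being a bi-Lipschitz image of $S$, is an Ahlfors $(1+\a)$-regular curve of bounded turning, and $\Omega$ is correspondingly a ``snowflake half-plane'' which one matches to the model $H$ by a Whitney-type decomposition compatible with a boundary identification of $\Gamma$ with $\partial H$; this is where the construction underlying Theorem \ref{thm:main} (in the case $N=0$) enters. \emph{(ii) Correcting the boundary.} The two bi-Lipschitz parametrizations $F_1|_{\partial H}$ and $\iota$ of $\Gamma$ by $\partial H\cong S$ differ by the bi-Lipschitz self-homeomorphism $u:=(F_1|_{\partial H})^{-1}\circ\iota$ of $\partial H$. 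A bi-Lipschitz self-map of $S=(\R,|\cdot|^{1/(1+\a)})$ is precisely a bi-Lipschitz self-map of the underlying real line, so $u$ has a Beurling--Ahlfors extension $\widetilde u\colon H\to H$, which is bi-Lipschitz for the Euclidean metric (the bi-Lipschitz refinement of the classical quasiconformal statement). Crucially, the Beurling--Ahlfors extension of such a map satisfies $\operatorname{Re}\widetilde u(z)\asymp\operatorname{Re}z$, and likewise for $\widetilde u^{-1}$; hence $\lambda(\widetilde u(z))\asymp\lambda(z)$, and a routine length-of-curves estimate shows that $\widetilde u$ is bi-Lipschitz for $d_\lambda$ as well. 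Then $F:=F_1\circ\widetilde u$ is bi-Lipschitz $(H,d_\lambda)\to(\overline{\Omega},|\cdot|)$ with $F|_{\partial H}=F_1\circ u=\iota$, which completes the construction and the proof.

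The main obstacle is step (i): that an arbitrary complementary Jordan domain of a bi-Lipschitz copy of the singular line in $\R^2$ is bi-Lipschitz equivalent to the Grushin half-plane. This is a quantitative uniformization of the same character as Theorem \ref{thm:main}, and it is forced --- in any bi-Lipschitz extension $\Psi$, the restriction $\Psi|_{\mathbb{G}_\a^{+}}$ exhibits exactly such an equivalence --- so I expect it to require re-examining the embedding construction of Theorem \ref{thm:main} with the target boundary curve prescribed, rather than following formally from the bare statement of Corollary \ref{cor:BLcor}. Equivalently, one may keep $\R^2$ fixed and use Corollary \ref{cor:BLcor} to replace $\mathbb{G}_\a$ by $\R^2$, reducing the whole statement to extending a bi-Lipschitz homeomorphism between two $(1+\a)$-snowflake lines in $\R^2$ to a bi-Lipschitz self-homeomorphism of $\R^2$ --- a planar bi-Lipschitz Schoenflies statement for quasicircles --- but the heart of the matter is unchanged.
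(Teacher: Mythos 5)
The decisive step of your argument, step (i) --- that an arbitrary complementary component $\Omega$ of the prescribed curve $\Gamma=\iota(\ell)$ is bi-Lipschitz homeomorphic to the Grushin half-plane, equivalently to your weighted model $(H,d_\lambda)$ --- is exactly where the content of the corollary lies, and you do not prove it. It does not follow from Corollary \ref{cor:BLcor} or Theorem \ref{thm:main}: those results produce \emph{one particular} bi-Lipschitz copy of $\mathbb{G}_{\a}$ in $\R^2$, with no control over which quasiline the singular line is sent to, whereas in your scheme the boundary curve $\Gamma$ is given in advance and may be any bi-Lipschitz image of $(\R,|\cdot|^{1/(1+\a)})$. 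Remarking that such a uniformization ``is forced'' by the statement being proved is circular, and your fallback --- redoing the construction of Theorem \ref{thm:main} with the target boundary curve prescribed --- is a substantial new construction that you only gesture at. The remaining ingredients (the reflection-and-gluing reduction, the change of variable $w=x_1^{1+\a}/(1+\a)$, and the bi-Lipschitz behaviour of the Beurling--Ahlfors extension of a bi-Lipschitz boundary map in the weighted metric) are plausible, but they all hang on this missing step, so the proposal as written is incomplete.

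The paper bypasses the obstacle entirely. With $f\colon\mathbb{G}_{\a}\to\R^2$ the bi-Lipschitz map of Theorem \ref{thm:main}, the composition $g\circ f^{-1}$ is a bi-Lipschitz homeomorphism between the quasilines $f(\G)$ and $g(\G)$. Choosing a quasisymmetric parametrization $h\colon\R\to f(\G)$, both $h$ and $g\circ f^{-1}\circ h$ admit quasiconformal extensions $F,G\colon\R^2\to\R^2$ of Beurling--Ahlfors type satisfying $\diam F(I)\simeq|DF(x)|\diam I$ for arcs $I\subset\R\times\{0\}$ and points $x$ with $\dist(x,I)\simeq|I|$, and likewise for $G$. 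Since $g\circ f^{-1}$ is bi-Lipschitz, $\diam F(I)\simeq\diam G(I)$ for such arcs, hence $|DF(x)|\simeq|DG(x)|$ everywhere, so $G\circ F^{-1}$ is a bi-Lipschitz self-map of $\R^2$, and $G\circ F^{-1}\circ f$ is the desired bi-Lipschitz extension of $g$. In other words, the ``bi-Lipschitz Schoenflies for snowflake lines'' that you correctly isolate at the end is resolved by comparing the derivative moduli of two quasiconformal extensions, not by uniformizing the complementary Jordan domains; without that trick, or an actual proof of your step (i), your argument does not close.
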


An alternative proof of Corollary \ref{cor:BLcor} along with new results on questions of quasisymmetric parametrizability and bi-Lipschitz embeddability of high-dimensional Grushin spaces can be found in a recent paper of Wu \cite{Wu2}.

\subsection{Outline of the proof of Theorem \ref{thm:main}.}\label{sec:outline}
The proof of Theorem \ref{thm:main} comprises two parts. In Section \ref{sec:rationalproof} we show Theorem \ref{thm:main} for rationals $\a\geq 0$ and in Section \ref{sec:irrationalproof} we use an Arzel\`a-Ascoli limiting argument to prove Theorem \ref{thm:main} for all real values $\alpha\geq 0$. The proof of Corollary \ref{cor:ext} is also given in Section \ref{sec:proofofmainthm}.

Much of the proof of Theorem \ref{thm:main} for rational $\a\geq 0$ follows the method of Wu in \cite{Wu}. The crux of the proof is the construction, for each rational $\a\in[N,N+\frac{n-1}n]$, of a quasisymmetric mapping $F_{\a}:\R^{N+2}\to \R^{N+2}$, such that in each ball $B(x,\frac{1}{2}\dist(x,\{0\}\times\R))$, $F_{\a}$ is the product of $\dist(x,\{0\}\times\R)^{-\frac{1}{1+\a}}$ and a $\lambda$-bi-Lipschitz mapping with $\lambda$ depending only on $N,n$. Such a mapping is $\frac{1}{1+\a}$-\emph{snowflaking} on $\{0\}\times\R \subset \R^{N+1}\times\R$ (i.e. $|F_{\a}(x) - F_{\a}(y)| \simeq |x-y|^{\frac{1}{1+\a}}$ for all $x,y \in \{0\}\times\R$) and maps the $2$-dimensional plane $\R\times\{0\}\times\R$ onto a quasiplane $\mathcal{P}_{\a}$. 
Composed with a quasisymmetric homeomorphism of $\mathbb{G}_\alpha$ onto $\R^2$, we obtain a bi-Lipschitz homeomorphism $f_{\a}$ of $\mathbb{G}_\alpha$ into $\mathcal{P}_{\a}$.

The quasisymmetric mappings $F_\a$ are constructed in Section \ref{sec:proprational} by iterating a finite number of bi-Lipschitz mappings $\Theta$ which are defined in Section \ref{sec:blocks} as in \cite{Wu}. However, a straightforward generalization of Wu's method, without additional care, would give no control on the local bi-Lipschitz constant $\lambda$ (thus on the bi-Lipschitz constant of $f_{\a}$), and the proof of Theorem \ref{thm:main} for irrational values of $\a$ would not be possible. To overcome this issue, we construct in Section \ref{sec:blocks} two sets of bi-Lipschitz mappings $\Theta_z$, corresponding to $z = 0$ and $z = N+\frac{n-1}n$, and then periodically alternate between these when constructing the quasisymmetric mapping $F_\alpha$.

Our inability to define $F_{\a}$ for irrational $\a>0$ is the reason for considering the irrational case separately; see Remark \ref{rem:distinction}. 


\medskip 

\textbf{Acknowledgements.} The authors are grateful to Jang-Mei Wu and Jeremy Tyson for suggesting this problem and for valuable discussions.    

\section{Preliminaries}

A homeomorphism $f\colon D\to D'$ between two domains in $ \mathbb{R}^n$ is called $K$-\emph{quasiconformal}  if it is orientation-preserving, belongs to $ W_{\text{loc}}^{1,n}(D)$, and satisfies the distortion inequality
\[|Df(x)|^n \le K J_f(x) \quad \text{a. e.} \,\,\, x \in D,\]
where $Df$ is the formal differential matrix and $J_f$ is the Jacobian. 

An embedding $f$ of a metric space $(X,d_X)$ into a metric space $(Y,d_Y)$ is said to be $\eta$-\emph{quasisymmetric} if there exists a homeomorphism $\eta \colon [0,\infty) \to [0,\infty)$ such that for all $x,a,b \in X$ and $t>0$ with $d_X(x,a) \leq t d_X(x,b)$,
\[d_Y(f(x),f(a)) \leq \eta(t)d_Y(f(x),f(b)). \]

A quasisymmetric mapping between two domains in $\R^n$ is quasiconformal. On the other hand, a quasiconformal mapping defined on a domain $D\subset \R^n$ is quasisymmetric on each compact set $E \subset D$. In $\R^n$ the two notions coincide: if $f:\R^n \to \R^n$ is $K$-quasiconformal then it is $\eta$-quasisymmetric for some $\eta$ depending only on $K,n$. For a systematic treatment of quasiconformal mappings see \cite{Vais1}.

A mapping $f\colon X \to Y$ between metric spaces is \emph{$L$-bi-Lipschitz} if there exists a constant $L \geq 1$ such that $L^{-1}d_X(x,y) \leq d_Y(f(x),f(y)) \leq Ld_X(x,y)$ for all $x,y \in X$.

In the following, we write $u\lesssim v$ (resp. $u \simeq v$) when the ratio $u/v$ is bounded above  (resp. bounded above and below) by positive constants. These constants may vary, but are described in each occurrence.

\section{Basic geometric constructions}\label{sec:blocks}
This section extends the construction by Wu \cite{Wu} to higher-dimensional targets; the notational conventions follow those of Wu as much as possible. Our goal is to build certain annular tubes and bi-Lipschitz maps between these tubes which are used in Section \ref{sec:proprational} to define quasiconformal homeomorphisms of $\R^{N+2}$. 
These constructions are based on examples of Bonk and Heinonen \cite{BH} and Assouad \cite{Assouad}.

\subsection{Definitions and notation}
An \emph{$N$-cube} $\mathcal{C}$ is the product $\D_1\times\cdots\times\D_N$ of bounded closed intervals $\D_i\subset\R$ of equal length. A \emph{$j$-face} of $\mathcal{C}$ is a product $\D_1'\times\cdots\times\D_N'$ where, for $j$ indices, $\D_i' = \D_i$ and for the other $N-j$ indices $\D_i'$ is an endpoint of $\D_i$. The $0$-faces of a cube $\mathcal{C}$ are its \emph{vertices}.

For an $N$-cube $\mathcal{C}$ and integer $0 \leq k \leq N$, we define a \emph{$k$-flag} of $\mathcal{C}$ to be a sequence $\{\mathcal{C}^j\}_{j=0}^k$ where $\mathcal{C}^j$ is a $j$-face of $\mathcal{C}$ and $\mathcal{C}^{j-1} \subset \mathcal{C}^j$ for all $1 \leq j \leq k$. Observe that for $N$-cubes $\mathcal{C}$ and $\widetilde{\mathcal{C}}$ and $(N-2)$-flags $\{\mathcal{C}^j\}$ and $\{\widetilde{\mathcal{C}}^j\}$, there exists a unique orientation-preserving similarity $\psi: \R^N \to \R^N$ such that $\psi(\mathcal{C})= \widetilde{\mathcal{C}}$ and $\psi(\mathcal{C}^j) = \widetilde{\mathcal{C}}^j$ for each $0 \leq j \leq N-2$.

For a point $x = (x_1,\dots,x_{N}) \in \R^{N}$ and a number $r>0$, define the cube 
\[ \mathcal{C}^{N}(x,r) = [x_1-r/2, x_1 +r/2]\times \dots \times [x_{N}-r/2,x_{N}+r/2]  \] 
and denote $\mathfrak{C}^{N} = \mathcal{C}^{N}(0,1)$ where $0$ here denotes the origin in $\R^{N}$. 

Slightly abusing the notation, we define for two numbers $0<r<R<\infty$ the \emph{cubic annulus}
\[ \mathcal{A}^{N}(r,R) = \overline{(R\mathfrak{C}^{N}) \setminus (r\mathfrak{C}^{N})} = [-R/2,R/2]^N \setminus (-r/2,r/2)^N.\]
Here and for the rest, for $X\subset \R^N$ and $c > 0$, we write $c X = \{c x : x\in X\}$.

Finally, for a polygonal arc $\ell \subset \R^{N}$ and some $\e>0$, define the \emph{cubic thickening} of $\ell$ 
\[ \mathcal{T}^{N}(\ell,\e) = \overline{\bigcup \mathcal{C}^{N}(x,\e)}\]
where the union is taken over all $x\in \ell$ such that their distances from the endpoints of $\ell$ are at least $\e/2$.

For the rest of Section \ref{sec:blocks} we fix integers $N\geq 0$, $n\geq 1$ and set 
\[ p = p_{N,n} = N+\frac{n-1}{n}\text{ and }M = M_{N,n} = 9^{n(N+2)}.\] 
The dependence of quantities and sets on $N,n$ is omitted whenever possible.

\subsection{Blocks}\label{sec:tubes} 

Let $I\subset\mathfrak{C}^{N+1}\times[0,1]$ be the straight-line path from $(0,\dots,0)$ to $(0,\dots,0,1)$ and $L\subset\mathfrak{C}^{N+1}\times[0,1]$ be the straight-line path from $(0,\dots,0)$ to $(0,\dots,0,\frac{1}2)$ concatenated with the straight-line path from $(0,\dots,0,\frac{1}2)$ to $(\frac{1}2,0,\dots,0,\frac{1}2)$.
    
We define three types of blocks that are used throughout the paper: 
\begin{enumerate}
\item the \emph{$I$-block} $Q_I = \mathcal{T}^{N+2}(I,\frac{M-2}M) = (\frac{M-2}{M}\mathfrak{C}^{N+1})\times [0,1]$;
\item the \emph{$L$-block} $Q_L = \mathcal{T}^{N+2}(L,\frac{M-2}M) $
\[= (\frac{M-2}{M}\mathfrak{C}^{N+1}\times[0,\frac{M-1}{M}])\cup ([\frac{1}{2},1]\times \frac{M-2}{M}\mathfrak{C}^{N+1}) \]
\item the \emph{regular block} $\mathsf{Q} = \mathfrak{C}^{N+1}\times[0,1]$.
\end{enumerate}
On each of these blocks, the entrance, exit and side are defined as follows. 
\begin{enumerate}
\item The \emph{entrance} of $Q_{I}$ is $\text{en}(Q_{I}) = Q_{I} \cap \{x_{N+2} = 0\}$,
\item the \emph{exit} of $Q_I$ is $\text{ex}(Q_I) = Q_I \cap \{x_{N+2} = 1\}$,
\item the \emph{side} of $Q_{I}$ is $\text{s}(Q_{I}) = \overline{\partial Q_{I} \setminus(\text{en}(Q_{I}) \cup \text{ex}(Q_{I}))}$.
\end{enumerate}
Analogous definitions can be made for $Q_L$ and $\mathsf{Q}$ with the difference that the \emph{exit} of $Q_L$ is $\text{ex}(Q_L) = Q_L \cap \{x_{1} = \frac12\}$. These definitions are applied to images of the respective objects under similarity maps. For a similarity map $h$ and $\ell \in \{h(I), h(L)\}$, we write $Q_\ell$ in place of $h(Q_I)$ or $h(Q_L)$. We call $Q_\ell$ the \emph{block associated with the segment $\ell$}; note that $Q_\ell$ naturally inherits a direction from $\ell$. 

\subsection{Cores}\label{sec:cores}
From each block $Q_I$, $Q_L$ and $\mathsf{Q}$ we remove a \emph{core} from its interior, which we describe in this section.

In Section \ref{sec:paths} we construct a simple polygonal path $J_I = J_I(N,n) \subset Q_I$ from $(0,\ldots, 0, 0)$ to $(0, \ldots, 0, 1)$ consisting of $M^{1+p}$ many $I$- and $L$-segments $\ell_1, \ldots, \ell_{M^{1+p}}$ of length $1/M$ labelled according to their order in $J_I$ with the following properties.
\begin{enumerate}
\item The segments $\ell_1$, $\ell_{M^{1+p}}$, and $\ell_{(M^{1+p}+1)/2}$ are $I$-segments.
\item For all $1 \leq m < M^{1+p}$, $Q_{\ell_m} \cap Q_{\ell_{m+1}}$ is the exit of $Q_{\ell_m}$ and the entrance of $Q_{\ell_{m+1}}$. If $1 \leq l, m \leq M^{1+p}$ and $|m-l|>1$, then $Q_{\ell_m} \cap Q_{\ell_l} = \emptyset$.  
\item $\text{en}(Q_{\ell_1}) = Q_{\ell_1} \cap \partial Q_I \subset \text{en}(Q_{I})$ and $\text{ex}(Q_{\ell_{M^{1+p}}}) = Q_{\ell_{M^{1+p}}} \cap \partial Q_I \subset \text{ex}(Q_{I})$. For $2 \leq m \leq M^{1+p}-1$, $Q_{\ell_m} \cap \partial Q_I = \emptyset$.   
\item $J_I$ is symmetric with respect to the plane $x_{N+2} = \frac12$.
\item $J_I$ is unknotted in $Q_I$, in the sense that there every bi-Lipschitz homeomorphism $\theta: (\partial Q_I,J_I) \to (\partial\mathsf{Q},I)$ extends to a bi-Lipschitz homeomorphism $\Theta : Q_I \to \mathsf{Q}$. 
\end{enumerate}
Similarly, in Section \ref{sec:paths} we construct a simple polygonal path $J_L = J_L(N,n) \subset Q_L$ satisfying the same properties, except that $\ell_{(M^{1+p}+1)/2}$ is an $L$-segment and $J_L$ is symmetric with respect to the plane $x_1 + x_{N+2} = \frac12$. 

Given $J_I = \bigcup_{m=1}^{M^{1+p}} \ell_m$ as above, define the \emph{core} 
\[\k_p(Q_I) = \bigcup_{m=1}^{M^{1+p}} Q_{\ell_m} = \mathcal{T}^{N+2}(J_I,\frac{M-2}{M^2}).\]
We similarly define the core $\k_p(Q_L)$. The entrance, the exit and the side of $\k_p(Q_I),\k_p(Q_L)$ are canonically defined. A second set of cores $\k_0(Q_I),\k_0(Q_L)$ in $Q_I,Q_L$, respectively, is defined as follows. Write $I = \bigcup_{m=1}^{M}\ell_m$ with $\ell_m = \{0\}\times[m-1,m] \subset \R^{N+1}\times I$ and set
\[ \k_{0}(Q_I) = \bigcup_{m=1}^M Q_{\ell_m} = \mathcal{T}^{N+2}(I,\frac{M-2}{M^2}).\] 
Similarly write $L = \bigcup_{m=1}^{M}\ell_m'$ where $\ell_{m}'$ is an $L$-segment if $m=\frac{M+1}{2}$ and an $I$-segment otherwise; and each $\ell_m'$ has length $1/M$. Set $\k_{0}(Q_L) = \bigcup_{m=1}^M Q_{\ell'_m} = \mathcal{T}^{N+2}(L,\frac{M-2}{M^2})$.

To simplify the notation, in what follows we write $Q_m$ instead of $Q_{\ell_m}$. 

Two types of cores are similarly defined for the regular block $\mathsf{Q}$. For each $z \in \{0,p\}$ let
\[ \mathsf{k}_z(\mathsf{Q}) = (M^{-1-z}\mathfrak{C}^{N+1}) \times [0,1]\]
which is composed of $M^{1+z}$ consecutive blocks 
\[\mathsf{Q}_m = M^{-1-z}\left ( \mathfrak{C}^{N+1} \times [m-1, m]\right ), \quad m = 1, \ldots, M^{1+z}.\]

\subsection{Flag-edges}\label{sec:edges}
We introduce in this section \emph{flag-edges} and \emph{flag-paths}, which generalize the \emph{edges} and \emph{edge paths} used by Wu \cite[Section 2.3]{Wu} to blocks of arbitrary dimensions. These play an important bookkeeping role later when defining bi-Lipschitz maps between annular tubes.

For the rest fix an $(N-1)$-flag $\mathcal{F}_0 = \{\mathcal{C}^j\}_{j=1}^{N-1}$ of $\mathfrak{C}^{N+1}$. We call the collection of faces $e_{\mathcal{F}_0} = \{(\frac{M-2}{M}\mathcal{C}^j)\times [0,1]\}_{j=0}^{N-1}$ a \emph{flag-edge} on $Q_I$.

Before defining flag-edges on $Q_L$, we first define faces on the side $\text{s}(Q_L)$ inductively. If $\mathcal{C}^0 = (x_1,\dots,x_{N+1})$, $x_j \in \{\pm \frac{M-2}{2M}\}$, is a $0$-face of $\text{en}(Q_L)$ then define the $L$-type path
\[ P(\mathcal{C}^0) = (\{(x_1,\dots,x_{N+1})\}\times [0,\frac12-x_{1}]) \cup ([x_1,\frac12]\times\{(x_{2},\dots,x_{N+1},\frac12-x_{1})\}.\]
Suppose that for every $j$-face $\mathcal{C}^j$ of $\text{en}(Q_L)$, the set $P(\mathcal{C}^j)$ has been defined. Let $\mathcal{C}^{j+1}$ be a $(j+1)$-face of $\text{en}(Q_L)$ and let $\mathcal{C}^{j}_1, \dots, \mathcal{C}^{j}_{2(j+1)}$ be the $j$-faces of $\mathcal{C}^{j+1}$. Then define $P(\mathcal{C}^{j+1})$ to be the union of all line segments with endpoints on $\bigcup_{i=1}^{2(j+1)} P(\mathcal{C}^{j}_{i})$ that lie entirely on $\text{s}(Q_L)$. We call $P(C^{j+1})$ a $(j+2)$-face on $\partial Q_L$. 

Let now $\mathcal{F} = \{\mathcal{C}^j\}_{j=0}^{N-1}$ be an $(N-1)$-flag of $\mathfrak{C}^{N+1}$. We call the collection $e_{\mathcal{F}} = \{P(\frac{M-2}{M}\mathcal{C}^j)\}_{j=0}^{N-1}$ a flag-edge on $Q_L$.

We now define \emph{flag-paths} along the cores $\k_z(Q_I)$, $\k_z(Q_L)$ for each value $z \in \{0, p\}$. We start with the $Q_L$ case. Rescaling an $(N-1)$-flag $\mathcal{F}$ of $\mathfrak{C}^{N+1}$, we obtain an $(N-1)$-flag $\mathcal{F}_1$ on the entrance of the first block $Q_1$ of $\k_z(Q_L)$. For a $j$-face $\mathcal{C}^j_1 \in \mathcal{F}_1$ define $P(\mathcal{C}^j_1) \subset \text{s}(Q_1)$ as above and note that $P(\mathcal{C}^j_1)$ defines uniquely a $j$-face $\mathcal{C}^j_2$ on the entrance of the block $Q_2$. Continuing inductively we obtain $j$-faces $\mathcal{C}^j_m$ on $\text{en}(Q_m)$ and $(j+1)$-faces $P(\mathcal{C}^j_m)$ on $\text{s}(Q_m)$. Define the \emph{flag-path} $w_{\mathcal{F}} = \{ \bigcup_{m=1}^{M^{1+z}}P(\mathcal{C}^j_m)\}_{j=0}^{N-1}$. For each block $Q_m$ in $\k_z(Q_L)$, $m \in \{1, \ldots, M^{1+z}\}$, we call $w_{\mathcal{F}} \cap Q_m$ the \emph{marked flag-edge of $Q_m$ derived from the data $(Q_L, e_{\mathcal{F}})$}.

A corresponding flag-path $w_{\mathcal{F}_0}$ is defined similarly for $\kappa_z(Q_I)$. For this we use the flag $\mathcal{F}_0$ instead of an arbitrary $(N-1)$-flag $\mathcal{F}$ of $\mathfrak{C}^{N+1}$. 

In addition, let $\mathsf{e} = \{\mathcal{C}^j \times [0,1] : \mathcal{C}_j \in \mathcal{F}_0\}$ be a flag-edge of $\mathsf{Q}$ and $\mathsf{w} = \{(M^{-1-z}\mathcal{C}^j) \times [0,1] : \mathcal{C}_j \in \mathcal{F}_0\}$ be a flag-path along $\mathsf{k}_z(\mathsf{Q})$. As before we omit the dependency on $z$ in the notation for $\mathsf{w}$.

\subsection{Annular tubes}\label{sec:annulartubes}

For each $z \in \{0,p\}$, define the \emph{annular tubes}
\[ \tau_{z}(Q_I) = \overline{Q_I \setminus \k_{z}(Q_I)} \text{, } \tau_{z}(Q_L) = \overline{Q_{L} \setminus \k_{z}(Q_L)} \text{ and }\mathsf{t}_z(\mathsf{Q}) = \overline{\mathsf{Q}\setminus\mathsf{k}_z(\mathsf{Q})}.\]
For $Q \in \{Q_I,Q_L\}$, we define the \emph{entrance} and \emph{exit} of each $\tau_z(Q)$ as $\text{en}(Q) \cap \tau_z(Q)$ and $\text{ex}(Q) \cap \tau_z(Q)$, respectively. These are isometric to the cubic annulus $A = \frac{M-2}{M}\mathcal{A}^{N+1}(\frac{1}{M},1)$. The remaining part of $\partial \tau_{z}(Q_I)$ is composed of the side $\text{s}(Q_I)$ of block $Q_I$ and the side $\text{s}(\k_{z}(Q_I))$ of the core $\k_{z}(Q_I)$. The boundary of $\tau_{z}(Q_L)$ can be similarly partitioned.

Define similarly the entrance and exit of $\mathsf{t}_z(\mathsf{Q})$. These are isometric to the cubic annulus $\mathsf{A}_z = \mathcal{A}^{N+1}(M^{-1-z},1)$. Note that $\mathsf{A}_z$ depends on $z$ while $A$ does not.\

If $\sigma$ is a similarity mapping of $Q_{I}$ onto some block $\sigma(Q_I)$, we denote by $\k_{z}(\sigma(Q_I))$ the image $\sigma(\k_{z}(Q_I))$ with $z\in\{0,p\}$. The sets $\k_{z}(\sigma(Q_L))$,  $\mathsf{k}_z(\sigma(\mathsf{Q}))$, $\tau_z(\sigma(Q_I))$, $\tau_z(\sigma(Q_L))$ and $\mathsf{t}_z(\sigma(\mathsf{Q}))$ are defined similarly when $\sigma$ is a similarity mapping.

\subsection{Bi-Lipschitz maps between annular tubes}\label{sec:BLmaps}
For each $z \in \{0, p\}$, each $Q\in \{Q_I, Q_L\}$, and every $(N-1)$-flag $\mathcal{F}$ of $\mathfrak{C}^{N+1}$, we define in this section bi-Lipschitz homeomorphisms $\Theta_z^{\mathcal{F}}: (\mathsf{t}_z(\mathsf{Q}), \mathsf{e}, \mathsf{w}) \to (\tau_z(Q), e_{\mathcal{F}}, w_{\mathcal{F}})$ where $\mathcal{F}=\mathcal{F}_0$ if $Q=Q_I$.

The construction of these maps is performed in 4 steps. In Step 1 we define the mappings on $\text{s}(\mathsf{Q})$, in Step 2 we define them on $\text{s}(\mathsf{k}_z(\mathsf{Q}))$ and in Step 3 we define them on the entrance and exit of $\mathsf{t}_z(\mathsf{Q})$. Combining the first three steps we obtain bi-Lipschitz mappings $\theta_z^{\mathcal{F}}: (\partial \mathsf{t}_z, \mathsf{e}, \mathsf{w}) \to (\partial \tau_z(Q), e_{\mathcal{F}}, w_{\mathcal{F}})$. Finally, in Lemma \ref{lem:BLext} we extend the mappings on the whole $\mathsf{t}_z(\mathsf{Q})$. 

For each $(N-1)$-flag $\mathcal{F}$ of $\mathfrak{C}^{N+1}$ let $\psi_{\mathcal{F}}$ be the unique rotation on $\R^{N+2}$ that maps $\mathfrak{C}^{N+1}$ onto itself and $\mathcal{F}$ onto $\mathcal{F}_0$.

\emph{Step 1}. Define $\theta_z^{\mathcal{F}_0}: (\text{s}(\mathsf{Q}), \mathsf{e}) \to (\text{s}(Q_I), e_{\mathcal{F}_0})$ by $\theta_z^{\mathcal{F}_0}(x,t) = (\frac{M-2}{M}x,t)$, where $x \in \partial \mathfrak{C}^{N+1}$ and $t \in [0,1]$. To define $\theta_z^{\mathcal{F}}$ onto $(\text{s}(Q_L), e_{\mathcal{F}})$ first observe that $\text{s}(Q_L)$ is the union of $L$-type $1$-fibers
\begin{align*}
L_x = &(\{\frac{M-2}{M}(x_1,\dots,x_{N+1})\}\times [0,1/2-\frac{M-2}{M}x_1]) \\
&\cup ([\frac{M-2}{M}x_1,1/2]\times\{(0,\dots,0,1/2)+\frac{M-2}{M}(x_2,\dots,x_{N+1},-x_1)\}
\end{align*}
where $x =(x_1,\dots,x_{N+1})\in\partial \mathfrak{C}^{N+1}$. Similarly, $\text{s}(\mathsf{Q})$ is the union of $1$-fibers $I_{x} = \{x\}\times [0,1]$ where $x\in \partial \mathfrak{C}^{N+1}$. Define $\theta_z^{\mathcal{F}}$ on $\text{s}(\mathsf{Q})$ by mapping each $I_{x}$ to $L_{\psi_{\mathcal{F}}(x)}$ by arc-length parametrization. Note that for this step, the mappings $\theta_z^{\mathcal{F}}$ do not actually depend on $z$.

\emph{Step 2}. We extend each $\theta_z^{\mathcal{F}}$ to the inner side $\text{s}(\mathsf{k}_z(\mathsf{Q}))$ of $\partial\mathsf{t}_z(\mathsf{Q})$. Given a block $\mathsf{Q}_m$ of $\mathsf{k}_z(\mathsf{Q})$ let $\zeta_z^m$ be the similarity map in $\mathbb{R}^{N+2}$ that maps $(\mathsf{Q}, \mathsf{e})$ onto $(\mathsf{Q}_m, \mathsf{w} \cap \mathsf{Q}_m)$. Similarly, given a block $Q_m$ in the core $\k_z(Q)$, let $\varepsilon(Q_m)$ be the marked flag-edge $w_{\mathcal{F}} \cap Q_m$ derived from $(Q, e_{\mathcal{F}})$ and let $\sigma_z^m: (Q(Q_m), e_{\mathcal{F}(Q_m)}) \to (Q_m, \varepsilon(Q_m))$ be a similarity map for some unique $Q(Q_m) \in \{Q_I, Q_J\}$ and $(N-1)$ flag $\mathcal{F}(Q_m)$ of $\mathfrak{C}^{N+1}$. (The dependence of $\sigma_z^m$ on $\mathcal{F}$ is omitted to simplify the notation.)  Define now $\theta_z^{\mathcal{F}}$ on the inner side $\text{s}(\mathsf{k}_z(\mathsf{Q}))$ by taking $\theta_z^{\mathcal{F}}|\text{s}(\mathsf{Q}_m) = \sigma_z^m \circ \theta_z^{\mathcal{F}(Q_m)} \circ (\zeta_z^m)^{-1}$ for all $1 \leq m \leq M^{1+z}$. Since the union of marked flag-edges of the $Q_m$ is the flag-path $w_{\mathcal{F}}$, the map $\theta_z^{\mathcal{F}}$ is defined consistently on the intersection of consecutive blocks and thus is well-defined.

\emph{Step 3}. For each $(N-1)$-flag $\mathcal{F}$ of $\mathfrak{C}^{N+1}$, let $\phi_z^{\mathcal{F}} : \mathsf{A}_z \to A$ with
\[ \phi_z^{\mathcal{F}}(xt) = \frac{M-2}{M}\left ( \frac{M-1}{M-M^{-z}}(t-1)+1 \right ) \psi_{\mathcal{F}}(x)\]
where $t\in[M^{-1-z},1]$ and $x\in \partial\mathfrak{C}^{N+1}$. Define $\theta_z^{\mathcal{F}}$ on the entrance and exit of $\mathsf{t}_z(\mathsf{Q})$ by $\phi_z^{\mathcal{F}}$ modulo an isometry chosen in such a way that the mappings $\theta_z^{\mathcal{F}}: (\partial \mathsf{t}_z(\mathsf{Q}), \mathsf{e}, \mathsf{w}) \to (\partial \tau_z(Q), e_{\mathcal{F}}, w_{\mathcal{F}})$ are homeomorphisms. Then $\theta_z^{\mathcal{F}}$ are in fact bi-Lipschitz.

The final step in the construction of the mappings $\Theta_z^{\mathcal{F}}$ is given in the next lemma.

\begin{lem}\label{lem:BLext}
Every bi-Lipschitz map $\theta_z^{\mathcal{F}} $ extends to a bi-Lipschitz map
\[ \Theta_z^{\mathcal{F}} : (\mathsf{t}_z(\mathsf{Q}), \mathsf{e}, \mathsf{w}) \to (\tau_z(Q), e_{\mathcal{F}}, w_{\mathcal{F}}).\]
\end{lem}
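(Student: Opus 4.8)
The plan is to reduce the extension to finitely many ``atomic'' extensions, each over a piece of uniformly bounded geometry, and then to patch. Fix $z\in\{0,p\}$, $Q\in\{Q_I,Q_L\}$ and an $(N-1)$-flag $\mathcal{F}$ (with $\mathcal{F}=\mathcal{F}_0$ when $Q=Q_I$). The domain tube is completely explicit: $\mathsf{t}_z(\mathsf{Q})=\mathcal{A}^{N+1}(M^{-1-z},1)\times[0,1]$ is a straight cubic annular cylinder, and the hyperplanes $\{x_{N+2}=jM^{-1-z}\}$, $j=0,\dots,M^{1+z}$, cut it into $M^{1+z}$ congruent-up-to-scaling cells, the $m$-th of which abuts the core block $\mathsf{Q}_m$ along $\text{s}(\mathsf{Q}_m)$. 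The first step is to produce, from the combinatorics of the path $J$ (properties (1)--(4) of Section \ref{sec:paths}), a \emph{matching} subdivision of the target tube $\tau_z(Q)$ into cells $\widetilde{R}_1,\dots,\widetilde{R}_{M^{1+z}}$ with $\widetilde{R}_m$ abutting the core block $Q_m=Q_{\ell_m}$ along $\text{s}(Q_m)$, where each $\widetilde{R}_m$ is, after a bi-Lipschitz change of coordinates with constant depending only on $N$ and $n$, one of finitely many model cells -- a cubic annular cylinder when $\ell_m$ is an $I$-segment, its $L$-bent analogue when $\ell_m$ is an $L$-segment. When $z=0$ this subdivision is trivial: $\tau_0(Q_I)$ is isometric to the annular cylinder $A\times[0,1]$ and $\tau_0(Q_L)$ to one of its $L$-bent analogues, and the argument below then has just one step.

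Second, the atomic extension. The common annular face of two consecutive cells lies in the interior of $\mathsf{t}_z(\mathsf{Q})$, so $\theta_z^{\mathcal{F}}$ does not see it; I would therefore first fix, for each of the finitely many model faces, a reference bi-Lipschitz identification between the domain and target copies, normalized by means of the flag-data so that the identification chosen at the exit of a cell coincides with the one chosen at the entrance of the next cell. Such a consistent choice is possible precisely because, by Step 2, the flag-path $\mathsf{w}$ (resp.\ $w_{\mathcal{F}}$) is the concatenation of the marked flag-edges of the core blocks. Together with these reference face-maps, $\theta_z^{\mathcal{F}}$ now determines, for every $m$, a bi-Lipschitz homeomorphism $\partial R_m\to\partial\widetilde{R}_m$ that respects the flag-data. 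Cutting each model cell open along finitely many $(N+1)$-dimensional walls adapted to the flag-edge and the flag-path turns it into a finite union of bi-Lipschitz balls, over which the induced boundary homeomorphism extends by the bi-Lipschitz boundary-extension argument of \cite{Wu} (for the product cells one may also write the extension down explicitly, interpolating in the annular-shell and longitudinal directions while keeping the flag-rails rectilinear). Since there are only finitely many model cells and gluing patterns, all determined by $N$ and $n$, each of these local extensions is $L_0$-bi-Lipschitz for a single $L_0=L_0(N,n)$.

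Third, the patching. The cells $R_m$ tile $\mathsf{t}_z(\mathsf{Q})$ and the cells $\widetilde{R}_m$ tile $\tau_z(Q)$ with pairwise disjoint interiors; consecutive cells meet along an annular face on which the local extensions agree by the normalization above, and on $\partial\mathsf{t}_z(\mathsf{Q})$ they agree with $\theta_z^{\mathcal{F}}$ by construction. Hence they combine into a homeomorphism $\Theta_z^{\mathcal{F}}\colon(\mathsf{t}_z(\mathsf{Q}),\mathsf{e},\mathsf{w})\to(\tau_z(Q),e_{\mathcal{F}},w_{\mathcal{F}})$ extending $\theta_z^{\mathcal{F}}$. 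Because the cells are uniformly bi-Lipschitz to a ball and meet along uniformly bi-Lipschitz faces, a routine patching estimate upgrades ``$L_0$-bi-Lipschitz on each cell'' to ``$L_1$-bi-Lipschitz globally'' with $L_1=L_1(N,n)$, which completes the proof.

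\textbf{Expected main obstacle.} The heart of the matter is the first step, the construction of the matching subdivision of $\tau_z(Q)$ by cells of uniformly bounded geometry. A naive ``one cell per core block'' decomposition by ambient blocks will not do: for $z=p$ the core $\k_p(Q)$ is a thin, highly wiggly tube occupying only a vanishing fraction of $Q$, so the cells around its blocks do not come close to exhausting $\tau_z(Q)$, and trying to enlarge them produces pieces of unbounded eccentricity. The subdivision must instead be built together with, and is essentially dictated by, the (self-similar) construction of $J_I$ and $J_L$ carried out in Section \ref{sec:paths}, and the flag-edge/flag-path bookkeeping of Section \ref{sec:edges} is exactly what assigns to each cell the consistent ``address'' that makes the local extensions glue. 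Put differently, Lemma \ref{lem:BLext} is the annular-tube counterpart of the unknottedness property (5) of the paths, and is proved along the same lines.
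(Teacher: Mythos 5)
There is a genuine gap, and you have in fact pointed at it yourself. Your argument hinges on the ``first step'': a subdivision of $\tau_p(Q)$ into $M^{1+p}$ cells $\widetilde{R}_m$, one abutting each core block $Q_m$, each uniformly bi-Lipschitz to a model cell and carrying consistent identifications of the common faces. You never construct this subdivision; you defer it to ``the self-similar construction of $J_I,J_L$'' and to the unknottedness property (5) of Section \ref{sec:cores}. But such a subdivision, together with the consistent face data, is essentially equivalent to the conclusion of Lemma \ref{lem:BLext} (push forward the slab decomposition of $\mathsf{t}_p(\mathsf{Q})$ under the sought map $\Theta_p^{\mathcal{F}}$ and you obtain it; conversely your patching step reconstructs $\Theta_p^{\mathcal{F}}$ from it), and in the paper property (5) is itself proved by the same argument as Lemma \ref{lem:BLext}, not prior to it -- so the appeal is circular. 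The paper's proof takes a genuinely different route that avoids any per-block cell structure on the tube: it uses the fact that $J_I$ is built in $N+1$ stages of swath replacements (Section \ref{sec:paths}) and untangles the core globally, stage by stage. At each stage every solid swath is compressed back onto the two $I$-blocks it replaced by a PL bi-Lipschitz isotopy supported in a box $\widetilde{Q}(\mathcal{S})$ around the swath, fixing $\partial Q$ and tracking the fibers $\Gamma_{k,u}$; iterating gives a bi-Lipschitz self-map $\Theta'$ of $Q$ with $\Theta'(\k_p(Q))=\k_0(Q)$, which after a straightening map $\Theta''$ is composed with the explicit $z=0$ extension $\Theta_0^{\mathcal{F}_0}$. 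Without either this untangling or an actual construction of your matching subdivision, the main content of the lemma is missing.

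A second, related gap is the ``atomic extension''. In ambient dimension $N+2$ you cannot simply cite the boundary-extension argument of \cite{Wu}, which is specific to $\R^3$: a bi-Lipschitz homeomorphism of the boundary of a (piecewise linear) ball in high dimensions is not known to extend inward with constant controlled by the boundary constant alone. This is precisely why the paper extends \emph{isotopies} rather than single boundary maps: it invokes V\"ais\"al\"a's extension theorem (Theorem \ref{thm:vais_ext}), which applies only to maps whose bi-Lipschitz constant lies below a threshold depending on the manifold, and gets below that threshold by cutting the time interval of the PL isotopy into finitely many short steps on which $g_{t_1t_2}$ is $(1+C|t_2-t_1|)$-bi-Lipschitz, extending each step and composing. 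Your outline needs this device (or an equivalent one) even after the subdivision problem is solved, in order to make the local extensions uniform in $N$ and $n$.
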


We verify the lemma first for $z=0$. If $Q=Q_I$ then define $\Theta_0^{\mathcal{F}_0} : \mathsf{t}_0(\mathsf{Q}) \to \tau_0(Q_I)$ with $\Theta_0^{\mathcal{F}_0}(xt,t') = (\phi_{0,\mathcal{F}_0}(xt),t')$. If $Q=Q_L$ then note that $\tau_0(Q_I)$ is the union of $1$-fibers $I_x$ and $\tau_0(Q_L)$ is the union of $1$-fibers $L_x$ where $I_x,L_x$ are as in Step 1 and $x\in \frac{M-2}{M}\mathcal{A}^{N+1}(M^{-1},1)$. Let $\theta_{\mathcal{F}} : \tau_0(Q_I) \to \tau_0(Q_L)$ be the bi-Lipschitz mapping that maps each $I_{x}$ to $L_{\psi_{\mathcal{F}}^{-1}(x)}$ by arc-length parametrization. Set $\Theta_0^{\mathcal{F}} = \theta_{\mathcal{F}}\circ\Theta_0^{\mathcal{F}_0}$.

The proof of Lemma \ref{lem:BLext} when $z=p$ relies on the structure of the paths $J_I$, $J_L$ and is deferred until Section \ref{sec:exten_proof}.

\section{Quasisymmetric snowflaking homeomorphisms in $\R^{N+2}$}\label{sec:proprational}

The key part of the proof of Theorem \ref{thm:main} for a rational $\a\in[N,N+\frac{n-1}{n}]$ is the construction of a quasisymmetric mapping $F_{\a} \colon \R^{N+2} \to \R^{N+2}$ that maps Whitney squares of a $2$-dimensional plane of $\R^{N+2}$ into sets which are bi-Lipschitz homeomorphic to the Whitney squares of $\mathbb{G}_{\a}$. 



\begin{prop}\label{prop:qsimilarity}
For all integers $N\geq 0$ and $n\geq 1$, there exists $\lambda >1$ depending only on $N,n$ satisfying the following. For each rational $\a\in[N,N+\frac{n-1}{n}]$, there exists an $\eta$-quasisymmetric map $F_{\a} : \R^{N+2} \to \R^{N+2}$ with $\eta$ depending only on $N,n$ such that, 
\begin{equation}\label{eq:quasisim2} 
|x'|^{\frac{\a}{1+\a}}F_{\a}|_{B(x,\frac{1}{2}|x'|)} \quad \text{ is }\lambda\text{-bi-Lipschitz}
\end{equation}
for all $x = (x',x'')\in\R^{N+1}\times\R$ with $|x'| \neq 0$.
\end{prop}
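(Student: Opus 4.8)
The plan is to produce $F_{\alpha}$ as the locally uniform limit of a sequence of homeomorphisms $F_{\alpha}^{(0)},F_{\alpha}^{(1)},\dots$ of $\R^{N+2}$, with $F_{\alpha}^{(0)}$ essentially the identity, where $F_{\alpha}^{(k+1)}$ is obtained from $F_{\alpha}^{(k)}$ by replacing it, inside the ``core'' blocks of generation $k$, with rescaled copies of the bi-Lipschitz maps $\Theta_{z}^{\mathcal F}$ of Lemma~\ref{lem:BLext}. Using periodicity in the $x_{N+2}$-direction it is enough to run the construction over a single unit-length piece of the singular line $\ell=\{0\}\times\R\subset\R^{N+1}\times\R$, so that the generation-$k$ cores $\mathcal K_k$ are finite unions of rescaled rotated copies of the regular block $\mathsf Q$, with $\mathcal K_{k+1}\subset\mathcal K_k$ and $\bigcap_k\mathcal K_k=\ell$. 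The feature that makes uniformity in $\alpha$ possible is that $z$ ranges only over the two-element set $\{0,p\}$ and $\mathcal F$ over the finite collection of $(N-1)$-flags of $\mathfrak C^{N+1}$, so only finitely many maps $\Theta_{z}^{\mathcal F}$ ever occur, all of them $c$-bi-Lipschitz with $c=c(N,n)$.

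In the recursion step, a generation-$k$ core block $B$ (a copy of $\mathsf Q$; say its side length is $d_k$) already carries on its side $\mathrm s(B)$ a boundary map which, by the construction of Section~\ref{sec:BLmaps}, equals a similarity composed with one of the maps $\theta_{z_k}^{\mathcal F(B)}$; from this I read off the flag $\mathcal F(B)$ and extend $F_{\alpha}^{(k)}$ across the annular tube $\tau_{z_{k+1}}(B)$ by the corresponding rescaled copy of $\Theta_{z_{k+1}}^{\mathcal F(B)}$, mapping $B$ onto the $I$- or $L$-type target block prescribed by the fixed paths $J_I,J_L$ and using the flag-path $w_{\mathcal F}$ to pass compatible flag data to the $M^{1+z_{k+1}}$ new core sub-blocks, which constitute $\mathcal K_{k+1}$; one sets $F_{\alpha}^{(k+1)}=F_{\alpha}^{(k)}$ off $\mathcal K_k$. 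This is consistent across generations precisely because the boundary maps $\theta_z^{\mathcal F}$ agree on $\mathrm s(\mathsf Q)$ independently of $z$ (Step~1 of Section~\ref{sec:BLmaps}), which is what lets one switch from $z=0$ to $z=p$ between consecutive generations. The decisive point is the size count: in the target each new core sub-block is a $\tfrac1M$-scaled copy of $B$'s image, while in the domain it is an $M^{-1-z_{k+1}}$-scaled copy of $B$. Hence, normalizing the initial blocks to unit size and writing $t_k$, $d_k$ for the target and domain block sizes at generation $k$ and $s_k=\sum_{i=1}^k z_i$, we get $t_k=M^{-k}$ and $d_k=M^{-k-s_k}$, so that on $\tau_{z_{k+1}}(B)$ the map $F_{\alpha}^{(k+1)}$ is a $c$-bi-Lipschitz map followed by a similarity of ratio $t_k/d_k=M^{s_k}$.

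For \eqref{eq:quasisim2} to hold with the right exponent and with $\lambda$ depending only on $N,n$, I need $M^{s_k}$ to be comparable, up to a factor depending only on $N,n$, to $|x'|^{-\alpha/(1+\alpha)}$ for every $x\in\tau_{z_{k+1}}(B)$; since such $x$ has $|x'|$ comparable to $d_k=M^{-k-s_k}$, this amounts to $s_k=(k+s_k)\tfrac{\alpha}{1+\alpha}$, i.e.\ $s_k=k\alpha$. As $\alpha\in[N,p]$ is rational, I may write $\alpha=(a/L)p$ with integers $0\le a\le L$, and I then take $(z_k)$ to be the balanced (Sturmian) sequence over $\{0,p\}$ for which $z_k=p$ exactly when $\lfloor k\alpha/p\rfloor>\lfloor(k-1)\alpha/p\rfloor$, so that $|s_k-k\alpha|<p$ for all $k$. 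The identity $s_k=k\alpha$ then holds up to an additive error $<p$, which multiplies every estimate by at most $M^{p/(1+\alpha)}<M=M(N,n)$. This is precisely why one alternates between the two families $\Theta_0$ and $\Theta_p$ according to a balanced pattern instead of, say, running $a$ consecutive copies of $\Theta_p$ followed by $L-a$ copies of $\Theta_0$, which would let the error grow with the period $L$ and hence with $\alpha$.

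Granting the recursion, $F_{\alpha}=\lim_k F_{\alpha}^{(k)}$ exists and, as in \cite{Wu}, is a homeomorphism of $\R^{N+2}$ onto itself: the maps stabilize off any neighborhood of $\ell$, and the images of the generation-$k$ cores have diameters comparable to $t_k\to0$. Given $x=(x',x'')$ with $x'\ne0$, the ball $B(x,\tfrac12|x'|)$ meets only the cores $\mathcal K_j$ with $j$ within a bounded (in $N,n$) range of the generation $k$ for which $d_k\simeq|x'|$, and on each of the finitely many pieces into which these cores cut the ball, $F_{\alpha}$ is a $c$-bi-Lipschitz map followed by a similarity whose ratio is comparable, up to a constant depending only on $N,n$, to $t_k/d_k\simeq|x'|^{-\alpha/(1+\alpha)}$; gluing these compatible pieces over the convex ball yields \eqref{eq:quasisim2} with $\lambda=\lambda(N,n)$. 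In particular $F_{\alpha}$ is locally $K$-quasiconformal on $\R^{N+2}\setminus\ell$ with $K=K(N,n)$, and since $\ell$ is a line, hence a closed set of $\sigma$-finite $(N+1)$-dimensional measure, it is removable for quasiconformal homeomorphisms; thus $F_{\alpha}$ is globally $K$-quasiconformal and hence, being a quasiconformal self-map of $\R^{N+2}$, $\eta$-quasisymmetric with $\eta=\eta(N,n)$. The part I expect to be genuinely delicate is the flag bookkeeping in the recursion step: verifying that the boundary data inherited by each core block has exactly the form $\theta_{z}^{\mathcal F}$ for a well-defined flag, that the flag-paths $w_{\mathcal F}$ transmit mutually compatible flags so that the rescaled extensions glue across shared faces into one homeomorphism, and that no bi-Lipschitz or distortion constant along the way acquires a dependence on $\alpha$. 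The scaling computation that yields the exponent $\alpha/(1+\alpha)$ is, by contrast, routine once the balanced sequence $(z_k)$ has been fixed.
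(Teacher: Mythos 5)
The first half of your construction --- the refinement inside a single block using only the finitely many maps $\Theta_{0}^{\mathcal F},\Theta_{p}^{\mathcal F}$, a bounded-discrepancy sequence $(z_k)$ with $|z_1+\cdots+z_k-k\a|\leq p$, and the scaling count giving local factor $M^{z_1+\cdots+z_k}\simeq |x'|^{-\a/(1+\a)}$ --- is essentially the paper's Section \ref{sec:qcinQ} and Lemma \ref{lem:qsimil}, and is fine (your Sturmian choice of $(z_k)$ is an acceptable substitute for Lemma \ref{lem:sum}). The genuine gap is in how you globalize. You propose to repeat the unit-block construction periodically in the $x_{N+2}$-direction and to set $F_\a^{(k+1)}=F_\a^{(k)}$ off the cores, with $F_\a^{(0)}$ ``essentially the identity''; such an $F_\a$ is (up to a bounded perturbation) the identity on every ball $B(x,\tfrac12|x'|)$ with $|x'|\gtrsim 1$, and is periodic along the singular line. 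But \eqref{eq:quasisim2} is required for \emph{all} $x$ with $|x'|\neq 0$: for $|x'|=R$ large it forces $F_\a$ to contract by a factor comparable to $R^{-\a/(1+\a)}$ on $B(x,\tfrac12 R)$, which the identity violates as soon as $\a>0$; likewise the large-scale snowflake estimate of Corollary \ref{cor:qsimilarity} (needed later in Case III of Proposition \ref{prop:rational}) fails for a map that is $x_{N+2}$-periodic on the line. So your construction only produces the right behavior at scales $\lesssim 1$ near $\{0\}\times\R$.

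What is missing is precisely the paper's Section \ref{sec:QCext}: after building $f:\mathsf{Q}\to Q_I$, one does \emph{not} extend by periodicity or by the identity, but by backward iteration. Choosing an interior $I$-block $Q_{l_1,\dots,l_{a+b}}$ of generation $a+b$ (the period of $(z_k)$), the exact self-similarity $f\circ\zeta|\mathsf{Q}=\sigma\circ f$ lets one define $F^{(k)}=\sigma^{-k}\circ f\circ\zeta^{k}$ on the exhausting blocks $\zeta^{-k}\mathsf{Q}$, whose union is $\R^{N+2}$; since $\zeta$ scales by $M^{-(a+b)(1+\a)}$ and $\sigma$ by roughly $M^{-(a+b)}$, the limit contracts at rate $|x'|^{-\a/(1+\a)}$ at \emph{all} scales, which is exactly what \eqref{eq:quasisim2} demands far from the line. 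This step is also where the rationality of $\a$ is actually used (periodicity of $(z_k)$ makes $\zeta,\sigma$ genuine similarities to iterate backwards; see Remark \ref{rem:distinction}); in your write-up rationality is invoked only to build the balanced sequence, which in fact exists for every real $\a$, so your argument as written would ``prove'' the statement for irrational $\a$ as well --- a sign that the global extension problem has been bypassed rather than solved. A further, smaller, point: since the unit-block map sends $\mathsf{Q}$ onto $Q_I$, whose cross-section is $\tfrac{M-2}{M}\mathfrak{C}^{N+1}$, it is not the identity on $\mathrm{s}(\mathsf{Q})$, so even the periodic gluing ``off the cores'' needs an interpolation you have not supplied.
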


The mapping $F_{\a}$ is $\frac{1}{1+\a}$-snowflaking on $\{0\}\times\R \subset \R^{N+1}\times\R$.

\begin{cor}\label{cor:qsimilarity}
Let $F_{\a} \colon \R^{N+2} \to \R^{N+2}$ be the mapping of Proposition \ref{prop:qsimilarity}. Then, there exists $\lambda'>1$ depending only on $N,n$ such that 
\[ (\lambda')^{-1}|x-y|^{\frac{1}{1+\a}} \leq |F_{\a}(x) - F_{\a}(y)| \leq \lambda'|x-y|^{\frac{1}{1+\a}}\] 
for all $x = (x',x''),y=(y',y'') \in \R^{N+1}\times\R$ with $|x''-y''| \geq \frac{1}{2}\max\{|x'|,|y'|\}$.
\end{cor}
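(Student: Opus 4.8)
The plan is to derive the two-sided snowflake estimate from Proposition \ref{prop:qsimilarity} by a chaining argument along a Whitney-type decomposition of the segment $[x,y]$, using the fact that on each ball $B(w,\tfrac12|w'|)$ the map $F_\a$ agrees with $|w'|^{-\frac{\a}{1+\a}}$ times a $\lambda$-bi-Lipschitz map. First I would record the geometric consequence of the hypothesis $|x''-y''|\geq \tfrac12\max\{|x'|,|y'|\}$: it forces $|x-y|\simeq |x''-y''|$ (with absolute constants) and, more importantly, it guarantees that the segment $[x,y]$ stays comparably far from the singular plane $\{0\}\times\R$ relative to its own length near each point, so that the Whitney balls along it have the expected geometry. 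Concretely, for $w\in[x,y]$ write $\rho(w)=|w'|$; then $\rho$ is $1$-Lipschitz along the segment and one checks that the pieces of $[x,y]$ on which $\rho$ is roughly constant and comparable to $2^{-k}|x-y|$ have total length $\simeq 2^{-k}|x-y|$, for $k$ ranging over the relevant dyadic scales down to the endpoints.

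Next I would split into two cases depending on whether one of $|x'|,|y'|$ is comparable to $|x-y|$ or both are much smaller. If, say, $|x'|\geq c|x-y|$ for a suitable absolute $c$, then $y\in B(x,\tfrac12|x'|)$ after adjusting constants, so \eqref{eq:quasisim2} applied at $x$ directly gives $|F_\a(x)-F_\a(y)|\simeq |x'|^{-\frac{\a}{1+\a}}|x-y|\simeq |x-y|^{-\frac{\a}{1+\a}}|x-y|=|x-y|^{\frac1{1+\a}}$, using $|x'|\simeq|x-y|$; the exponents are such that the implied constants depend only on $\lambda$, hence only on $N,n$. The main case is when both $|x'|$ and $|y'|$ are small compared to $|x-y|$. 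Here I would pick points $x=w_0,w_1,\dots,w_J=y$ along $[x,y]$ so that consecutive points satisfy $|w_{j}-w_{j+1}|\leq \tfrac12\rho(w_j)$ (Whitney spacing), with $\rho(w_j)$ increasing from scale $\simeq|x'|$ up to scale $\simeq|x-y|$ at the midpoint and then decreasing back to $\simeq|y'|$; there are only boundedly many points at each dyadic scale. Applying \eqref{eq:quasisim2} at each $w_j$ gives $|F_\a(w_j)-F_\a(w_{j+1})|\simeq \rho(w_j)^{-\frac{\a}{1+\a}}|w_j-w_{j+1}|$, and summing over a dyadic block at scale $2^{-k}|x-y|$ yields a contribution $\simeq (2^{-k}|x-y|)^{-\frac{\a}{1+\a}}\cdot(2^{-k}|x-y|)=(2^{-k}|x-y|)^{\frac1{1+\a}}$. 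Since $\tfrac1{1+\a}>0$, these form a geometric series dominated by its largest term, $|x-y|^{\frac1{1+\a}}$, giving the upper bound $|F_\a(x)-F_\a(y)|\lesssim|x-y|^{\frac1{1+\a}}$.

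For the lower bound I would use the reverse estimate on the dominant block together with the $\eta$-quasisymmetry of $F_\a$ to prevent cancellation. One clean way: by quasisymmetry applied with the triple $(w_\ast, x, y)$, where $w_\ast$ is a point on $[x,y]$ at distance $\simeq|x-y|$ from both endpoints with $\rho(w_\ast)\simeq|x-y|$, one has $|F_\a(x)-F_\a(y)|\gtrsim \min\{|F_\a(w_\ast)-F_\a(x)|,|F_\a(w_\ast)-F_\a(y)|\}$; and each of $|F_\a(w_\ast)-F_\a(x)|$, $|F_\a(w_\ast)-F_\a(y)|$ is $\gtrsim|x-y|^{\frac1{1+\a}}$ by the upper-bound argument run in reverse at the scale where $\rho\simeq|x-y|$ (using the left inequality in \eqref{eq:quasisim2}). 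Combining the two bounds yields the claim with $\lambda'$ depending only on $N,n$. The main obstacle I anticipate is the bookkeeping in the chaining: ensuring that the Whitney points $w_j$ can be chosen so that $\rho(w_j)$ genuinely traverses every dyadic scale between the endpoint scales and $|x-y|$ with a bounded number of points per scale, and that the balls $B(w_j,\tfrac12\rho(w_j))$ actually contain $w_{j+1}$; this is where the hypothesis $|x''-y''|\geq\tfrac12\max\{|x'|,|y'|\}$ is essential, since it is exactly what makes $[x,y]$ behave like a geodesic transverse to the singular plane rather than running nearly parallel to it.
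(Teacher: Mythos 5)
Your chaining strategy has a genuine gap at its geometric core. The hypothesis $|x''-y''|\geq\tfrac12\max\{|x'|,|y'|\}$ does \emph{not} make the segment $[x,y]$ transverse to the singular plane $\{0\}\times\R$; it only says that the vertical displacement dominates the distances of the two \emph{endpoints} to that plane. It is perfectly consistent with $x'=y'$ and $|x'|=\e\ll|x-y|$, e.g. $x=(\e,0,\dots,0,0)$, $y=(\e,0,\dots,0,1)$: here the segment runs parallel to the singular line at constant distance $\rho\equiv\e$, so $\rho$ never climbs to scale $|x-y|$, the claimed Whitney structure (``pieces with $\rho\simeq 2^{-k}|x-y|$ have length $\simeq 2^{-k}|x-y|$'') fails, and your chain consists of $\simeq|x-y|/\e$ balls of radius $\e/2$, each contributing $\simeq\e^{-\frac{\a}{1+\a}}\cdot\e=\e^{\frac{1}{1+\a}}$ by \eqref{eq:quasisim2}. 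The resulting upper bound is $\simeq|x-y|\,\e^{-\frac{\a}{1+\a}}$, which blows up as $\e\to0$ instead of giving $|x-y|^{\frac{1}{1+\a}}$. The same configuration destroys your lower-bound step, since no point $w_\ast\in[x,y]$ with $\rho(w_\ast)\simeq|x-y|$ exists there. (Your final sentence correctly identifies the dangerous ``nearly parallel'' regime but wrongly asserts the hypothesis rules it out; in fact that regime is the main case the corollary must cover, because it is how pairs of points near the singular line look.)

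The repair is to leave the segment: one must compare $x,y$ with auxiliary points at distance $\simeq|x-y|$ from the singular plane, which is what the paper does. It sets $\hat{x}=(w,x'')$, $\hat{y}=(w,y'')$ with $|w|=3|x-y|$, notes $|x-y|\simeq|\hat{x}-x|\simeq|\hat{x}-\hat{y}|$ (this is where the hypothesis is used, via $|x-y|\simeq|x''-y''|$ and $|x'|,|y'|\lesssim|x-y|$), applies \eqref{eq:quasisim2} once at $\hat{x}$ to get $|F_{\a}(\hat{x})-F_{\a}(\hat{y})|\simeq|x-y|^{\frac{1}{1+\a}}$, and then transfers this to $|F_{\a}(x)-F_{\a}(y)|$ by two applications of the quasisymmetry of $F_{\a}$ (to the triples $x,y,\hat{x}$ and $\hat{x},x,\hat{y}$). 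No chaining or summation over scales is needed; if you insist on a chaining picture, the chain must detour out to distance $\simeq|x-y|$ from the plane and back, mimicking a Grushin-type geodesic, rather than follow the Euclidean segment.
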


\begin{proof}
Let $\hat{x} = (w,x'')$ and $\hat{y} = (w,y'')$ where $w\in\R^{N+1}$ satisfies $|w|=3|x-y|$. Note that $|x-y| \simeq |\hat{x}-x| \simeq |\hat{x}-\hat{y}|$. By (\ref{eq:quasisim2}) we have that $|F_{\a}(\hat{x}) - F_{\a}(\hat{y})| \simeq |w|^{-\frac{\a}{1+\a}}|x-y|$ and applying the fact that $F_{\a}$ is quasisymmetric twice (to the points $x,y,\hat{x}$ and to the points $\hat{x},x,\hat{y}$), $|F_{\a}(x) - F_{\a}(y)| \simeq |F_{\a}(\hat{x}) - F_{\a}(\hat{y})| \simeq |x-y|^{\frac{1}{1+\a}}$.
\end{proof}

The rest of this section is devoted to the proof of Proposition \ref{prop:qsimilarity}. As mentioned in Section \ref{sec:outline}, the construction in \cite{Wu} can be used to deduce Proposition \ref{prop:qsimilarity} for all rational $\a\in [N,N+1)$ but with no control on $\lambda$ and $\eta$. For this reason, while in \cite{Wu} the mapping $F_{\a}$, for $\a=1$, is obtained by iterating one family of bi-Lipschitz mappings $\Theta^{\mathcal{F}}$, here $F_{\a}$ is obtained by a periodic iteration of $2$ families of bi-Lipschitz mappings $\Theta^{\mathcal{F}}_{z}$ (for the two values $z\in\{0,N+\frac{n-1}{n}\}$) in the alternating fashion of Section \ref{sec:prelimarr}.

For the rest we fix integers $N\geq 0$, $n\geq 1$ and a rational number $\a\in[N,p_{N,n}]$ where, as before, $p_{N,n} = N+\frac{n-1}{n}$. In Section \ref{sec:qcinQ} we define the map $F_{\a}$ on the block $\mathsf{Q}$ and in Section \ref{sec:QCext} we extend it in all $\R^{N+2}$.

\subsection{A preliminary arrangement}\label{sec:prelimarr}
Suppose that $\frac{\a}{p_{N,n}} = \frac{a}{a+b}$ for some $a,b\in\N$. Using the next lemma we create a periodic sequence $(z_k)_{k\geq 1}$ that takes only the two values $0,p_{N,n}$ and $|z_1+\dots +z_k- k\a| \leq p_{N,n}$ for all $k \geq 1$.

\begin{lem}\label{lem:sum}
Suppose that $y<z<x$ are such that $(a+b)z = ax +by$ for some nonnegative integers $a,b$. Then, there exists a finite sequence $(z_k)_{1}^{a+b}$ which has $a$ terms $x$ and $b$ terms $y$ such that, for all $k=1,\dots,a+b$,
\begin{equation}\label{eq:sum}
|z_1+\dots +z_k - kz| \leq x-y.
\end{equation}
\end{lem}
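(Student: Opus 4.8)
The plan is to prove Lemma~\ref{lem:sum} by a greedy construction: at each step $k$, choose $z_k\in\{x,y\}$ so as to push the running partial sum $S_k := z_1+\dots+z_k$ back towards the target $kz$, while keeping track of how many $x$'s and $y$'s remain available. The point of the hypothesis $(a+b)z = ax+by$ is that it forces the full sum $S_{a+b}$ to equal $(a+b)z$ exactly, so the greedy choices automatically use up exactly $a$ copies of $x$ and $b$ copies of $y$ without any separate bookkeeping at the end.

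First I would set up the error quantity $D_k := S_k - kz$, with $D_0 = 0$. Appending $x$ changes the error by $x - z > 0$, and appending $y$ changes it by $y - z < 0$; note $(x-z) - (y-z) = x - y$. The greedy rule is: if $D_{k-1} \le 0$ append $x$ (when an $x$ is still available), and if $D_{k-1} > 0$ append $y$ (when a $y$ is still available). I would then show by induction that $|D_k| \le x - y$ for all $k$. Indeed, if $D_{k-1} \le 0$ and we add $x$, then $D_k = D_{k-1} + (x-z)$; since $x - z \le x - y$ and $D_{k-1} \le 0$ we get $D_k \le x - y$, and since $D_{k-1} \ge -(x-y)$ and $x - z \ge 0$ we get $D_k \ge -(x-y)$. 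The symmetric computation handles the case $D_{k-1} > 0$ with a $y$ appended. (One should also observe that $-(x-z) \ge -(x-y)$ reads $z \ge y$ and $x-z\le x-y$ reads $z\ge y$, both true; similarly on the other side.)

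The step I expect to require the most care is verifying that the greedy rule never ``runs out'' of the letter it wants — i.e., that when $D_{k-1}\le 0$ there is genuinely an $x$ left, and when $D_{k-1}>0$ there is a $y$ left. This is where the exact identity $(a+b)z = ax+by$ is essential. I would argue as follows: suppose at some stage all $a$ copies of $x$ have been used but $D_{k-1}\le 0$ and $k-1 < a+b$. Then the remaining terms are all $y$'s, so $S_{a+b} = S_{k-1} + (a+b-(k-1))y$. Using $S_{k-1} = D_{k-1} + (k-1)z \le (k-1)z$ and the fact that $y < z$, one computes $S_{a+b} < (a+b)z$, contradicting $S_{a+b} = ax + by = (a+b)z$. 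The symmetric argument shows the $y$'s cannot be exhausted while $D_{k-1}>0$. Hence the greedy construction is always well-defined, produces exactly $a$ copies of $x$ and $b$ copies of $y$ by the time $k = a+b$, and satisfies \eqref{eq:sum} at every step. If it is cleaner, I might instead track the pair $(a_k, b_k)$ of remaining $x$- and $b$-counts and show $D_{k-1} = a_{k-1}(z-x) + b_{k-1}(z - y)$... wait, more precisely $D_{k-1} = -(a_{k-1}(x-z) + b_{k-1}(z-y)) + $ something; I would rather derive directly that $D_{k-1} = (a - a_{k-1})(x-z) + (b - b_{k-1})(y - z)$ and combine with $a_{k-1}(x-z) + b_{k-1}(y-z) = -D_{k-1}$ coming from the identity, which makes the ``never run out'' claim transparent. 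Either route closes the proof.
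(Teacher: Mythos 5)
Your proposal is correct and follows essentially the same route as the paper: the same greedy rule (append $x$ when the running error $D_{k-1}$ is nonpositive, otherwise $y$) together with the same one-step induction giving $|D_k|\le x-y$. The only cosmetic difference is the bookkeeping of letter counts: the paper observes that once one letter's quota is exhausted the sign rule forces the other letter for all remaining steps, whereas you use the identity $(a+b)z=ax+by$ to show the prescribed letter can never run out prematurely --- two equivalent arguments.
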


\begin{proof}
We may assume that $a,b\neq 0$; otherwise the claim is immediate. 

Define $z_k$ inductively as follows. Set $z_1 = x$. Suppose that the terms $z_1,\dots,z_k$ have been defined; if $z_1+\dots + z_k \geq kz$, set $z_{k+1} = y$ and if $z_1+\dots z_k < kz$, set $z_{k+1} = x$.

Suppose that for some $k_0 < a+b$, the sequence $\{z_1,\dots,z_{k_0}\}$ contains exactly $b$ terms $y$. Then, $z_1+\dots + z_{k_0} - k_0z = (a+b-k_0)(z-x) <0$ and thus, $z_{k_0+1} = x$. Similarly, $z_k = x$ for all $k=k_0+1,\dots,a+b$ and $(z_k)^{a+b}_1$ has exactly $b$ terms $y$ and $a$ terms $x$. The same arguments apply if, for some $k_0 < a+b$, the sequence $\{z_1,\dots,z_{k_0}\}$ contains exactly $a$ terms $x$.

To show (\ref{eq:sum}) we apply induction on $k$. If $k=1$ then $|z_1-z| = |x-z| < x-y$. Suppose that (\ref{eq:sum}) is true for some $k<a+b$. Without loss of generality, assume that $z_1+\dots+z_{k} - k z \geq 0$. Then, $z_{k+1} = y$ and
\[y-x < y-z \leq z_1+\dots+z_{k+1} - (k+1)z \leq (x-y) + (y-z) \leq x-y.\qedhere\]
\end{proof}

By Lemma \ref{lem:sum}, there exists a sequence $(z_k)_{1}^{a+b}$ having $a$ terms $p_{N,n}$ and $b$ terms $0$ such that, for each $k = 1,\dots,a+b$,
\[ |z_1 + \dots + z_k - k\a| \leq p_{N,n}.\]
Extend $z_k$ to all $k\in\N$ with $z_k = z_{k'}$ if $k \equiv k' \mod (a+b)$.

\subsection{A quasiconformal map on $\mathsf{Q}$}\label{sec:qcinQ}

We define a quasiconformal mapping $f: (\mathsf{Q},\mathsf{e}) \to (Q_I, e_{\mathcal{F}_0})$, iterating the mappings $\Theta^{\mathcal{F}}_z$ as in \cite[Section 2.6]{Wu}.

Set $\mathsf{K}_0 = \mathsf{t}(\mathsf{Q})$ and for $k\geq 1$,
\[ \mathsf{K}_{-k} = (M^{-k-z_1-\dots-z_{k}}\mathfrak{C}^{N+1})\times [0,1].\] 
Moreover, define $\mathsf{T}_{-k} = \overline{\mathsf{K}_{-k} \setminus \mathsf{K}_{-k-1}}$ with $k \geq 0$. Then 
\[ \mathsf{Q} = (\{0\}\times[0,1]) \cup \bigcup_{k \geq 0} \mathsf{T}_{-k}.\]
Set also $K_0 = Q_I$, $K_{-1} = \k_{z_1}(Q_I)$, and $T_0 = \overline{K_0 \setminus K_{-1}} = \tau_{z_1}(Q)$. Let $f|\mathsf{T}_0 = \Theta_{z_1}^{\mathcal{F}_0}: \mathsf{T}_0 \to T_0$ noticing that $\mathsf{T}_0 = \mathsf{t}_{z_1}(\mathsf{Q})$.  

For every $l \in [1, M^{1+z_1}]$, let $\varepsilon_l$ be the marked flag-edge on $Q_l$ derived from $(Q_I, e_{\mathcal{F}_0})$, and let $\sigma^l_{z_1}: (Q(l), e_{\mathcal{F}(l)}) \to (Q_l, \varepsilon_l)$ be the similarity mapping defined in Section \ref{sec:BLmaps} where $Q(l)\in\{Q_I,Q_L\}$ and $\mathcal{F}(l)$ is a $(N-1)$-flag of $\mathfrak{C}^{N+1}$. The similarity $\sigma^l_{z_1}$ induces naturally a core $\k_{z_2}(Q_l)$, consequently a tube $\tau_{z_2}(Q_l) = \overline{Q_l \setminus \k_{z_2}(Q_l)}$ to each block $Q_l$ in $K_{-1}$.  

Set $K_{-2} = \bigcup_l \k_{z_2}(Q_l)$ and $T_{-1} = \overline{K_{-1} \setminus K_{-2}} = \bigcup_l \tau_{z_2}(Q_l)$.  Since $\mathsf{T}_{-1} = \bigcup_l \mathsf{t}_l$, the mapping $f|\mathsf{T}_{-1}: \mathsf{T}_{-1} \to T_{-1}$ is defined by gluing together homeomorphisms 
\[f|\mathsf{t}_l = \sigma_{z_1}^l \circ \Theta^{\mathcal{F}(l)}_{z_2} \circ (\zeta_{z_1}^l)^{-1}: \mathsf{t}_l \to \tau_{z_2}(Q_l)\]
where $\zeta^l_{z_1} : (\mathsf{Q},\mathsf{e}) \to (\mathsf{Q}_l,\mathsf{w}\cap \mathsf{Q}_l)$ is the similarity defined in Section \ref{sec:BLmaps}.

The union $W_{-1}$ of marked flag-edges $\varepsilon_l$ is a flag-path along $\k_{z_2}(Q_I)$ going from $\{(\frac{M-2}{M^2}\mathcal{C}^j)\times\{0\} : \mathcal{C}^j \in \mathcal{F}_0\}$ to $\{(\frac{M-2}{M^2}\mathcal{C}^j)\times\{1\} : \mathcal{C}^j \in \mathcal{F}_0\}$, and the restrictions of $f|\mathsf{t}_l$ to the entrance and to the exit of $\mathsf{t}_l$ are identical modulo isometries for all $l$. Hence, we conclude that the gluing, therefore the homeomorphism $f|\mathsf{T}_{-1}$, is well-defined. We now have the extension $f: \mathsf{T}_0 \cup \mathsf{T}_{-1} \to T_0 \cup T_{-1}$.  

For the next step, the index $l$ in the previous step is replaced by $l_1$. 

Fix $l_1 \in \{1,\dots, M^{1+z_1}\}$. Associated to each of the $M^{1+z_2}$ blocks $Q_{l_1, l_2}$ ($1 \leq l_2\leq M^{1+z_2}$) in the core $\k_{l_1} = \k_{z_2}(Q_{l_1})$, the process of defining $f|\mathsf{t}_{l_1}$ has uniquely defined a core $\k_{l_1, l_2}=\k_{z_3}(Q_{l_1,l_2})$, a tube $\tau_{l_1, l_2} = \tau_{z_3}(Q_{l_1,l_2})$, a marked flag-edge $\varepsilon_{l_1,l_2}$, a block $Q(l_1, l_2) \in \{Q_I, Q_J\}$, an $(N-1)$-flag $\mathcal{F}(l_1, l_2)$ of $\mathfrak{C}^{N+1}$, and a similarity mapping 
\[ \sigma_{z_1,z_2}^{l_1,l_2} : (Q(l_1,l_2), e_{\mathcal{F}(l_1,l_2)}) \to (Q_{l_1,l_2}, \varepsilon_{l_1,l_2}).\]
Similarly, we define for each $l_2=1,\dots,M^{1+z_2}$ a similarity mapping
\[ \zeta_{z_1,z_2}^{l_1,l_2} : (\mathsf{Q}, \mathsf{e}) \to (\mathsf{Q}_{l_1,l_2}, \mathsf{w}\cap \mathsf{Q}_{l_1,l_2}).\]  

The union $W_{-2}$ of these $M^{2 + z_1 + z_2}$ marked flag-edges is a flag-path along $K_{-2}$ from $\{(\frac{M-2}{M^3}\mathcal{C}^j)\times\{0\} : \mathcal{C}^j \in \mathcal{F}_0\}$ to $\{(\frac{M-2}{M^3}\mathcal{C}^j)\times\{1\} : \mathcal{C}^j \in \mathcal{F}_0\}$, and the union $K_{-3}$ of the cores of these $M^{2+z_1+z_2}$ new blocks is a topological $(N+2)$-cube. Set $T_{-2} = \overline{K_{-2} \setminus K_{-3}}$.  We now extend $f: \mathsf{T}_0 \cup \mathsf{T}_{-1} \cup \mathsf{T}_{-2} \to T_0 \cup T_{-1} \cup T_{-2}$ by gluing together the homeomorphisms 
\[f|\mathsf{t}_{l_1,l_2} = \sigma_{z_1,z_2}^{l_1,l_2} \circ \Theta_{z_3}^{\mathcal{F}(l_1,l_2)} \circ (\zeta_{z_1,z_2}^{l_1,l_2})^{-1}|\mathsf{t}_{l_1,l_2} \to \tau_{l_1, l_2}.\]

Continuing this process inductively in a self-similar manner, we obtain a homeomorphism $f : \mathsf{Q}\setminus (\{0\}\times [0,1]) \to Q_I \setminus \gamma$, where $\gamma$ is the snowflake open curve $\gamma = \bigcap_{k=1}^\infty K_{-k}$. 

\begin{lem}\label{lem:qsimil}
There exists $C > 1$ depending only on $N,n$ such that $M^{- \a k}f$ is $C$-bi-Lipschitz on each of the $M^{k+z_1+\dots+z_k}$ tubes in $\mathsf{T}_{-k}$.
\end{lem}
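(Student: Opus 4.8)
The plan is to track the bi-Lipschitz constant of $f$ along the self-similar construction, tube by tube, using the fact that each building block map $\Theta_z^{\mathcal{F}}$ is bi-Lipschitz with constant depending only on $N,n$. First I would fix $k\ge 1$ and one of the $M^{k+z_1+\dots+z_k}$ tubes in $\mathsf{T}_{-k}$; unwinding the inductive definition of $f$, its restriction to this tube has the form $\sigma \circ \Theta_{z_{k+1}}^{\mathcal{F}} \circ \zeta^{-1}$, where $\zeta = \zeta_{z_1,\dots,z_k}^{l_1,\dots,l_k}$ is a similarity carrying $(\mathsf{Q},\mathsf{e})$ onto the block $\mathsf{Q}_{l_1,\dots,l_k}$ of $\mathsf{K}_{-k}$, and $\sigma = \sigma_{z_1,\dots,z_k}^{l_1,\dots,l_k}$ is a similarity carrying the model block $Q(l_1,\dots,l_k)\in\{Q_I,Q_L\}$ onto the corresponding core block $Q_{l_1,\dots,l_k}$ inside $K_{-k}$. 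Since a composition $\sigma\circ\Theta\circ\zeta^{-1}$ of a $C_0$-bi-Lipschitz $\Theta$ (with $C_0 = C_0(N,n)$ the maximum bi-Lipschitz constant of the finitely many maps $\Theta_z^{\mathcal{F}}$, $z\in\{0,p\}$) with similarities $\zeta^{-1}$ of ratio $r_\zeta^{-1}$ and $\sigma$ of ratio $r_\sigma$ is $C_0$-bi-Lipschitz after rescaling by $r_\sigma/r_\zeta$, the key point reduces to computing the ratio $r_\sigma / r_\zeta$ and showing it equals $M^{\a k}$ up to a factor bounded in terms of $N,n$.

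Next I would compute these two similarity ratios explicitly from the construction. The block $\mathsf{Q}_{l_1,\dots,l_k}$ is, by definition of $\mathsf{K}_{-k}$ as $(M^{-k-z_1-\dots-z_k}\mathfrak C^{N+1})\times[0,1]$ subdivided into $M^{1+z_1}\cdots M^{1+z_k}$ congruent pieces, a translate of $(M^{-k-z_1-\dots-z_k}\mathfrak C^{N+1})\times[0,M^{-k-z_1-\dots-z_k}\cdot(\text{length factor})]$; tracking the subdivision at each of the $k$ stages shows $r_\zeta = M^{-k - (z_1+\dots+z_k)}$. On the target side, at stage $j$ one passes from a block of $K_{-(j-1)}$ to a core block $Q_{l_1,\dots,l_j}$ of $K_{-j}$, and by the definition of the cores $\k_z(Q_I),\k_z(Q_L)$ — which consist of $M^{1+z}$ sub-blocks of side $1/M^{1+z}$ relative to the parent, for $z = z_{j}$ — the scaling contributed at stage $j$ is $M^{-(1+z_j)}$; hence $r_\sigma = M^{-k-(z_1+\dots+z_k)}$ as well for the purely $L$/$I$-similarity part, but the subtlety is that on the target the blocks $Q_I,Q_L$ are genuine blocks (not cores) so one must separate the "core-to-block" rescaling used to define $\tau_{z}(Q)$ from the ambient scale. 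Carefully done, the net scale of $f$ on the stage-$k$ tube is $M^{(z_1+\dots+z_k) - k\cdot 0}$ on the source compared to $M^{(z_1+\dots+z_k)+\text{const}}$ on the target, and the discrepancy between the source scale $M^{-k-(z_1+\dots+z_k)}$ and the claimed normalization $M^{-\a k}$ is governed precisely by $|z_1+\dots+z_k - k\a|$, which by the preliminary arrangement in Section \ref{sec:prelimarr} is at most $p = p_{N,n}$. Thus $M^{-\a k} f$ restricted to each such tube differs from a $C_0$-bi-Lipschitz map by a fixed dilation of factor between $M^{-p}$ and $M^{p}$, giving the claim with $C = C_0 M^{p_{N,n}}$, which depends only on $N,n$.

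The main obstacle I expect is the bookkeeping of the similarity ratios: one must verify that the self-similar substitution at each stage genuinely multiplies the length scale by exactly $M^{-(1+z_j)}$ on the $j$-th step (rather than by some $z_{j-1}$-dependent factor coming from the distinction between a block and its core/tube), and that the isometric gluings along marked flag-edges — which make $f|\mathsf T_{-k}$ well-defined — do not disturb the scale. A clean way to organize this is to note that $\Theta_z^{\mathcal F}$ maps the model tube $\mathsf t_z(\mathsf Q)$, whose ``inner radius'' is $M^{-1-z}$ and ``outer radius'' $1$, onto $\tau_z(Q)$, whose inner and outer radii are comparable (with constants depending only on $N,n$) to $M^{-1}$ and $1$ respectively; feeding this through $k$ compositions and invoking $|z_1+\dots+z_k-k\alpha|\le p$ yields the stated bi-Lipschitz bound for $M^{-\alpha k}f$ with a constant that is a function of $N,n$ alone. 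Once this is set up, the verification that the local bi-Lipschitz constant is uniform is the content we need for Proposition \ref{prop:qsimilarity} and, ultimately, for the Arzel\`a--Ascoli argument handling irrational $\alpha$.
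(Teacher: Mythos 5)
Your overall strategy is the same as the paper's: on each tube of $\mathsf{T}_{-k}$ write $f=\sigma\circ\Theta_{z_{k+1}}^{\mathcal{F}}\circ\zeta^{-1}$, compute the two similarity ratios, and control $|z_1+\dots+z_k-k\a|$ via Lemma \ref{lem:sum}. However, the middle of your argument misstates the target-side ratio: it is not true that $r_\sigma=M^{-k-(z_1+\dots+z_k)}$. Each core block $Q_{\ell_m}$ of $\k_z(Q)$ is the block associated with a segment of length $1/M$, i.e.\ a $1/M$-scaled copy of $Q_I$ or $Q_L$ \emph{regardless of $z$}; the parameter $z$ only changes the number $M^{1+z}$ of such blocks, the path $J_I$ or $J_L$ wiggling through extra dimensions to accommodate them inside a block of unit length. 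Hence the per-stage scale on the target is $1/M$ and $r_\sigma\simeq M^{-k}$ (the paper records $\frac{M-2}{M}M^{-k}$), while on the source $r_\zeta=M^{-k-(z_1+\dots+z_k)}$; the net scale of $f$ on a stage-$k$ tube is therefore $r_\sigma/r_\zeta\simeq M^{z_1+\dots+z_k}$, and Lemma \ref{lem:sum} gives $|z_1+\dots+z_k-k\a|\le p_{N,n}$, so $M^{-\a k}f$ is $C$-bi-Lipschitz with $C$ determined by $M$, $p_{N,n}$ and the finitely many bi-Lipschitz constants of the maps $\Theta_0^{\mathcal{F}},\Theta_{p_{N,n}}^{\mathcal{F}}$. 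Had your stated value of $r_\sigma$ been correct, the net scale would be $1$ and $M^{-\a k}f$ would contract by the unbounded factor $M^{-\a k}$, contradicting your own conclusion; likewise, the discrepancy you invoke between ``the source scale $M^{-k-(z_1+\dots+z_k)}$'' and ``the normalization $M^{-\a k}$'' is of order $M^{-k}$ and is not controlled by Lemma \ref{lem:sum}. Your closing observation --- that $\Theta_z^{\mathcal{F}}$ maps the model tube with inner radius $M^{-1-z}$ onto $\tau_z(Q)$ with inner radius comparable to $M^{-1}$ --- is exactly the correct bookkeeping; once the erroneous ratio is replaced by it, your argument coincides with the paper's proof.
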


\begin{proof} The scaling factor of each $\zeta_{z_1,\dots,z_k}^{l_1,\dots,l_k}$ is $M^{-k-z_1-\dots-z_k}$ and the scaling factor of each $\sigma_{z_1,\dots,z_k}^{l_1,\dots,l_{k}}$ is $\frac{M-2}{M}M^{-k}$. Moreover, only a finite number of different bi-Lipschitz mappings $\Theta_{z}^{\mathcal{F}}$ have been used in the definition of $f$. Therefore, by Lemma \ref{lem:sum}, $M^{-\a k}f$ is $C$-bi-Lipschitz on each of the $M^{k+z_1+\dots+z_k}$ tubes in $\mathsf{T}_{-k}$, for some constant $C > 1$ depending on $M$, $p_{N,n}$, and the bi-Lipschitz constants of the maps $\Theta^{\mathcal{F}}_{0},\Theta^{\mathcal{F}}_{p_{N,n}}$; thus $C$ depends only on $N,n$.
\end{proof} 

Hence, the mapping $f: \mathsf{Q}\setminus(\{0\}\times [0,1]) \to Q_I \setminus \gamma$ is $K$-quasiconformal for some $K$ depending only on $N,n$. By a theorem of V\"ais\"al\"a for removable singularities \cite[Theorem 35.1]{Vais1}, $f$ can be extended to a $K$-quasiconformal mapping from $\mathsf{Q}$ onto $Q_I$.

\begin{rem}
Note the following self-similar property on $I$-blocks: whenever $Q_{l_1,\dots,l_{a+b}}$ is an $I$-block of $K_{-a-b}$ then
\begin{equation}\label{eq:selfsimil}
f|\mathsf{t}_{l_1,\dots,l_{a+b}} = \sigma_{z_1,\dots,z_{a+b}}^{l_1,\dots,l_{a+b}} \circ f|\mathsf{t} \circ (\zeta_{z_1,\dots z_{a+b}}^{l_1,\dots,l_{a+b}})^{-1}|\mathsf{t}_{l_1,\dots,l_{a+b}}.
\end{equation}
\end{rem}

In particular, the periodicity of $\{z_{k}\}$ with period $a+b$ implies the periodicity of $f$ (up to similarities) to tubes $\mathsf{t}_{l_1,\dots,l_{k}}$ and $\mathsf{t}_{l_1,\dots,l_{k+a+b}}$ when $Q_{l_1,\dots,l_{k}}$, $Q_{l_1,\dots,l_{k+a+b}}$ are $I$-blocks.

Finally, note that the snowflake curve $\gamma = \bigcup_{k=1}^{\infty} K_{-k}$ is the image of the line segment $\{0\}\times[0,1]$ under $f$. 

\subsection{Quasiconformal extension to $\R^{N+2}$}\label{sec:QCext}
We now extend the mapping $f: \mathsf{Q} \to Q_I$ to a quasiconformal homeomorphism of $\mathbb{R}^{N+2}$ by backward iteration. 

Fix an $I$-block $Q_{l_1,\dots,l_{a+b}}$ in some core $\k_{l_1,\dots,l_{a+b}}$ of $Q_I$ with $l_i \neq  1, M^{1+z_i}$. Such a block exists by the first property of the path $J_I$ in Section \ref{sec:cores}. 

Let $\zeta = \zeta_{z_{1},\dots,z_{a+b}}^{l_1,\dots,l_{a+b}}$ be the similarity in $\mathbb{R}^{N+2}$ that maps $(\mathsf{Q}, \mathsf{e})$ to $(\mathsf{Q}_{l_1,\dots,l_{a+b}}, \mathsf{w} \cap \mathsf{Q}_{l_1,\dots,l_{a+b}})$, and $\sigma = \sigma_{z_1,\dots,z_{a+b}}^{l_1,\dots,l_{a+b}}$ be the similarity in $\mathbb{R}^{N+2}$ that maps $(Q_I, e_{\mathcal{F}_0})$ to $(Q_{l_1,\dots,l_{a+b}}, w_{\mathcal{F}_0} \cap Q_{l_1,\dots,l_{a+b}})$ as in Section \ref{sec:qcinQ}. Note that $\zeta$ has a scaling factor $M^{-(a+b)(1+\a)}$ and $\sigma$ has a scaling factor $\frac{M-2}{M}M^{-(a+b)}$.  

Because $l_i \neq 1, M^{1+z_i}$, the space $\mathbb{R}^{N+2}$ is the union of an increasing sequence of $I$-blocks and regular blocks 
\[ \R^{N+2} = \bigcup_{k \geq 0} \sigma^{-k} Q_I = \bigcup_{k \geq 0} \zeta^{-k} \mathsf{Q}.\]
If $l_i = 1$ for all $i=1,\dots,a+b$ or $l_i = M^{1+z_i}$ for all $i=1,\dots,a+b$ then these unions would be proper subsets of $\R^{N+2}$.  

Define homeomorphisms $F^{(k)}: \zeta^{-k}\mathsf{Q} \to \sigma^{-k}Q_I$, $k \geq 0$, by 
\begin{equation}\label{eq:selfsimil2}
F^{(k)} = \sigma^{-k} \circ f \circ \zeta^k.
\end{equation}
The self similar property \eqref{eq:selfsimil} implies that $f \circ \zeta|\mathsf{Q} = \sigma \circ f$.  Therefore, $F^{(k)}|\mathsf{Q} = \sigma^{-k} \circ f \circ \zeta^k|\mathsf{Q} = f$ for all $k \geq 0$, and $F^{(k')}|\zeta^{-k} \mathsf{Q} = F^{(k)}$ for all $k' \geq k \geq 0$. Thus, the mapping $F_{\a} = \lim_{k \to \infty} F^{(k)}: \R^{N+2} \to \R^{N+2}$ is well-defined. Moreover, since all mappings $F^{(k)}$ are $K$-quasiconformal, $F_{\a}$ is $K$-quasiconformal and therefore, $F_{\a}$ is $\eta$-quasisymmetric for some $\eta$ depending only on $N,n$.
 
\begin{rem}\label{rem:distinction}
The backward iteration depends on the fact that $\a$ is rational. In fact, for any real number $\a \in [N, p_{N,n}]$, the arguments of Lemma \ref{lem:sum} can be used to find a sequence $(z_k)_{k\geq 0}$ having terms in $\{0,p_{N,n}\}$ such that $|z_1+\cdots+z_k - k\a| \leq p_{N,n}$ for all $k\geq 0$. Therefore, a quasiconformal map $f: \mathsf{Q} \to Q_I$ can be constructed as in Section \ref{sec:qcinQ}. However, if $\a$ is irrational the sequence $(z_k)$ is not periodic and the backward iteration cannot be used to extend this map in all $\R^{N+2}$.
\end{rem}

We show now that the quasisymmetric mapping $F_{\a}$ satisfies (\ref{eq:quasisim2}).

\begin{proof}[{Proof of Proposition \ref{prop:qsimilarity}}]
By the self similar property (\ref{eq:selfsimil2}) and the scaling factors of $\zeta,\sigma$, it is enough to show (\ref{eq:quasisim2}) only for $x=(x',x'') \in \mathsf{Q}$ with $|x'|\neq 0$. Suppose that $x \in t_{l_1\cdots l_k}$. Then, $B(x,\frac{1}{2}|x'|)$ intersects at most $m$ annulus tubes $t_{l_1\cdots l_{k'}}$ for some $m$ depending only on $M$, thus on $N,n$. Since $|x'| \simeq M^{-k-z_1-\cdots-z_k} \simeq M^{-k(\a+1)}$, we deduce (\ref{eq:quasisim2}) by Lemma \ref{lem:qsimil}. 
\end{proof}

\section{Proof of Theorem \ref{thm:main}}\label{sec:proofofmainthm}

In this section we give the proof of Theorem \ref{thm:main}. Using Proposition \ref{prop:qsimilarity}, we first show in Section \ref{sec:rationalproof} the theorem when $\a\geq 0$ is a rational number and then, in Section \ref{sec:irrationalproof}, we prove the theorem for all real numbers $\a\geq 0$ applying an Arzel\`a-Ascoli limiting argument.

Assuming Theorem \ref{thm:main}, the proof of Corollary \ref{cor:ext} is as follows.

\begin{proof}[{Proof of Corollary \ref{cor:ext}}]
Suppose that $\a\in[0,1)$ and $g$ is a bi-Lipschitz embedding of the singular line $\G = \{x_1=0\}\subset \mathbb{G}_{\a}$ into $\R^2$. We show that $g$ extends to a bi-Lipschitz embedding of $\mathbb{G}_{\a}$ onto $\R^2$.

Let $f \colon \mathbb{G}_{\a} \to \R^2$ be the bi-Lipschitz mapping of Theorem \ref{thm:main}. Then, $g(\G)$ and $f(\G)$ are quasilines in $\R^2$ and $g\circ f^{-1}$ is a bi-Lipschitz homeomorphism between these quasilines. Consider an $\eta$-quasisymmetric mapping $h : \mathbb{R} \to f(\G)$. By the Beurling-Ahlfors quasiconformal extension \cite{BerAhl}, there exists a $K$-quasiconformal extension $F : \R^2 \to \R^2$ of $h$, with $K$ depending only on $\eta$ that satisfies
\[ \diam{F(I)} \simeq |DF(x)| \diam{I}\]
for every arc $I\subset \mathbb{R}\times\{0\}$ and every point $x\in\R^2$ for which $\dist(x,I)\simeq |I|$. Here the ratio constants depend only on $\eta$. Similarly, there exists a quasiconformal mapping $G : \R^2 \to \R^2$ extending $g\circ f^{-1} \circ h$ satisfying the properties of $F$. 

We claim that $F = G \circ F^{-1}\circ f$ is bi-Lipschitz extension of $g$. Indeed, for any point $x\in \R^2$ we have $|DF(x)|/|DG(x)| \simeq \diam{F(I)}/\diam{G(I)}$ for some suitable $I\subset \mathbb{R}\times\{0\}$. Since $g\circ f^{-1}$ is bi-Lipschitz, the last ratio is comparable to 1. Therefore, $|DF(x)|\simeq|DG(x)|$ and $G \circ F^{-1}$ is bi-Lipschitz.
\end{proof}

\subsection{Proof of Theorem \ref{thm:main} when $\a$ is rational}\label{sec:rationalproof}

We first recall two basic properties of the generalized Grushin metric. 

The dilation property states that for any $\a \geq 0$ and any $\d >0$,
\[d_{\mathbb{G}_{\a}}((\d x_1,\d^{1+\a}x_2),(\d y_1,\d^{1+\a}y_2)) = \d d_{\mathbb{G}_{\a}}((x_1,x_2),(y_1,y_2)).\]
This can be found in \cite{Bellaiche} for the case $\a = 1$, but it applies equally to the case of arbitrary $\a \geq 0$. 

Given $x = (x_1,x_2), y = (y_1,y_2) \in \mathbb{G}_{\a}$ we define the \emph{Grushin quasidistance}
\begin{equation*} \label{eq:grushinQuasimetric}
d_{\a}(x,y) = |x_1 - y_1| + \min\left\{ |x_2-y_2|^{\frac{1}{1+\a}}, \frac{|x_2-y_2|}{|x_1|^{\a}}\right\}.
\end{equation*}
It is well-known that the quasidistance $d_{\a}(x,y)$ is comparable to $d_{\mathbb{G}_{\a}}(x,y)$; see e.g. \cite[Theorem 2.6]{FrLa}. In fact, the following result holds true.
\begin{lem}\label{lem:grushinQuasimetric}
For each $m\geq 0$ there exists $C(m)>1$ such that for all $\a \in [0,m]$ and all $x,y \in \mathbb{G}_{\a}$
\[ C(m)^{-1}d_{\a}(x,y) \leq d_{\mathbb{G}_{\a}}(x,y) \leq C(m)d_{\a}(x,y)\]
\end{lem}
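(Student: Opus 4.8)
The plan is to prove the lemma by first establishing a uniform (in $\a$) version of the standard comparison $d_\a \simeq d_{\mathbb{G}_\a}$, and then upgrading the constant to depend only on an upper bound $m$ for $\a$ by a compactness argument. The starting point is the known fact (cited from \cite[Theorem 2.6]{FrLa}) that for each fixed $\a$ there is a constant $C_\a$ with $C_\a^{-1} d_\a \leq d_{\mathbb{G}_\a} \leq C_\a d_\a$. To see that $C_\a$ can be chosen locally bounded in $\a$, I would first reduce to a normalized configuration using the dilation property: given $x,y$ with $x_1 \neq 0$, apply the dilation with $\d = |x_1|^{-1}$ to arrange $|x_1| = 1$, and observe that both $d_\a$ and $d_{\mathbb{G}_\a}$ scale the same way, so the comparison constant is unchanged. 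After this normalization, the relevant estimates become estimates on paths and on the explicit formula for $d_\a$ that can be made uniform.

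The upper bound $d_{\mathbb{G}_\a} \lesssim d_\a$ is the easy direction: given $x,y$, one constructs an explicit competitor path whose $d_{\mathbb{G}_\a}$-length is controlled by $d_\a(x,y)$. Concretely, move first in the $x_1$-direction to a point where $|x_1|$ is comparable to $d_\a(x,y)$ (this costs $\lesssim |x_1 - y_1| + d_\a(x,y)$), then move in the $x_2$-direction where the vector field $|x_1|^\a \partial_{x_2}$ has length comparable to $d_\a(x,y)^\a$, so traversing the $x_2$-displacement costs $\lesssim |x_2-y_2| / d_\a(x,y)^\a \lesssim d_\a(x,y)$ by the definition of the min, then return in the $x_1$-direction. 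One checks the implied constants depend only on $m$ (the only place $\a$ enters is through exponents $\a \in [0,m]$ applied to quantities that, after normalization, lie in a fixed compact range away from $0$ and $\infty$). For the lower bound $d_\a \lesssim d_{\mathbb{G}_\a}$, one takes any absolutely continuous path $\g = (x_1(t), x_2(t))$ from $x$ to $y$ and shows its $d_{\mathbb{G}_\a}$-length dominates $d_\a(x,y)$; the term $|x_1 - y_1|$ is immediate since $\int |x_1'| \geq |x_1 - y_1|$, and for the $x_2$-term one splits into the case where the path stays in $\{|x_1| \leq 2\max(|x_1|,|y_1|)\}$ (forcing a lower bound of the form $|x_2-y_2|/|x_1|^\a$) versus the case where it travels far in the $x_1$-direction (forcing a lower bound of snowflake type $|x_2-y_2|^{1/(1+\a)}$), using a Hölder-type interpolation in the latter case.

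Finally, to get the constant depending only on $m$, I would run an Arzelà–Ascoli / limiting argument: suppose no uniform $C(m)$ works, so there are $\a_k \in [0,m]$ and points $x^{(k)}, y^{(k)}$ with $d_{\mathbb{G}_{\a_k}} / d_{\a_k}$ (or its reciprocal) tending to $\infty$; after normalizing by dilation and by a translation in the $x_2$-variable, pass to a subsequence with $\a_k \to \a_\infty \in [0,m]$ and the normalized point configurations converging, and derive a contradiction from continuity of both quasidistance and Grushin distance in $\a$ (continuity of $d_{\mathbb{G}_\a}$ in $\a$ follows from the length-functional definition and dominated convergence on a fixed near-optimal family of paths). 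The main obstacle I anticipate is making the uniformity in $\a$ genuinely rigorous near the degenerate endpoint behavior — specifically controlling the two competing terms in the min defining $d_\a$ simultaneously across the whole range $\a \in [0,m]$, and ensuring that the path constructions and the lower-bound case analysis do not hide an $\a$-dependence in the constants; the compactness step is what ultimately removes any residual dependence, provided one has first shown the per-$\a$ comparison is finite and the configurations can be normalized into a compact set.
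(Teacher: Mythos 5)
The paper does not actually reprove this lemma: it simply notes that the proof is identical to Lemma~2 of Ackermann \cite{Ack}, which is precisely the direct two-sided path estimate you sketch, and the point of the statement is that in that argument the only $\alpha$-dependence enters through exponents $\alpha\in[0,m]$ after the dilation normalization, so the constant is automatically of the form $C(m)$. In particular, if you carry out your direct estimates correctly, your final Arzel\`a--Ascoli/compactness step is superfluous --- which is fortunate, because as formulated it does not work (see below).

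Two concrete problems. First, your competitor path for the upper bound $d_{\mathbb{G}_\alpha}\lesssim d_\alpha$ fails in the regime where both points are far from the singular line, i.e. $|x_1|\gg d_\alpha(x,y)$ and $|x_2-y_2|\le |x_1|^{1+\alpha}$, so that the minimum is realized by $|x_2-y_2|/|x_1|^{\alpha}$. There, moving in the $x_1$-direction to a level comparable to $d_\alpha(x,y)$ costs about $|x_1|$, not $\lesssim |x_1-y_1|+d_\alpha(x,y)$; and at that lower level the $x_2$-traverse costs about $|x_2-y_2|/d_\alpha(x,y)^{\alpha}$, which need not be $\lesssim d_\alpha(x,y)$ since one only knows $|x_2-y_2|\le d_\alpha(x,y)\,|x_1|^{\alpha}$. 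The correct construction stays at height $|x_1|$ in that case (traverse $x_2$ at level $x_1$, then move in $x_1$ to $y_1$), and only in the snowflake regime $|x_2-y_2|>|x_1|^{1+\alpha}$ does one move \emph{outward} to level $|x_2-y_2|^{1/(1+\alpha)}$ before traversing $x_2$; with this case split all constants are absolute apart from exponents in $[0,m]$, exactly as needed. Second, the compactness step does not close: the anisotropic dilations and $x_2$-translations are the only available symmetries, the ratio $|x_1|/d_\alpha(x,y)$ is invariant under them and can be arbitrarily large (there is no $x_1$-translation isometry), so the ``normalized'' configurations do not lie in a compact set and the contradiction sequence need not subconverge; to handle the escaping regime you would anyway need the direct Riemannian-type estimate above, at which point the limiting argument adds nothing. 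So: fix the path construction, verify the lower bound case split with constants explicit in $\alpha\in[0,m]$ (this part of your sketch is fine and is what \cite{Ack} does), and delete the compactness step.
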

The proof of Lemma \ref{lem:grushinQuasimetric} is identical to that of Lemma 2 in \cite{Ack}.
The next lemma is a simple application of the Mean Value Theorem and its proof is left to the reader.

\begin{lem}\label{lem:MVT}
For all $m\geq 0$ there exists $c(m)>1$ such that for all $\a\in [0,m]$ and $x, y\in\R$ with $|x|\geq|y|$,
\[ c(m)^{-1} |x|^{\a}|x-y| \leq |x|x|^{\a} - y|y|^{\a}| \leq c(m)|x|^{\a}|x-y|.\]
\end{lem}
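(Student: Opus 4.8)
The plan is to study the odd function $\phi_\a(t) = t|t|^\a = \operatorname{sgn}(t)\,|t|^{1+\a}$, which is continuous on $\R$ and, for $\a>0$, differentiable with $\phi_\a'(t) = (1+\a)|t|^\a$ (the case $\a=0$ being the identity, hence trivial). Since $\phi_\a$ is odd, replacing $(x,y)$ by $(-x,-y)$ changes neither $|\phi_\a(x)-\phi_\a(y)|$ nor $|x|^\a|x-y|$, so I may reduce to two configurations under the standing hypothesis $|x|\ge|y|$: \emph{(i)} $x\ge y\ge 0$; and \emph{(ii)} $x>0\ge y$ with $x\ge -y$. In both, $|x|=x$ and the segment joining $y$ to $x$ lies in $[-x,x]$.

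For the upper bound I would apply the Mean Value Theorem: $|\phi_\a(x)-\phi_\a(y)| = (1+\a)|\xi|^\a|x-y|$ for some $\xi$ strictly between $x$ and $y$, and since $|\xi|\le x=|x|$ in both configurations, this gives $|\phi_\a(x)-\phi_\a(y)|\le (1+m)|x|^\a|x-y|$. For the lower bound in configuration (i), I would split on the size of $y$: if $y\ge x/2$, then $\phi_\a'(s)\ge (1+\a)(x/2)^\a$ on $[y,x]$, so integrating yields $\phi_\a(x)-\phi_\a(y)\ge (x/2)^\a(x-y)\ge 2^{-m}|x|^\a|x-y|$; if $y<x/2$, then $x-y\le x$ while $\phi_\a(x)-\phi_\a(y)\ge\phi_\a(x)-\phi_\a(x/2) = (1-2^{-1-\a})x^{1+\a}\ge \tfrac12 x^{1+\a}\ge \tfrac12|x|^\a|x-y|$. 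In configuration (ii), $\phi_\a(x)-\phi_\a(y) = x^{1+\a}+|y|^{1+\a}\ge x^{1+\a}\ge \tfrac12 x^\a(x+|y|) = \tfrac12|x|^\a|x-y|$, using $x+|y|\le 2x$. Taking $c(m) = \max\{1+m,\,2^m\}$ then covers both directions and all configurations, with $c(m)$ depending only on $m$.

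I do not expect any genuine obstacle: the only points needing a word of care are the reduction to the two sign configurations via the oddness of $\phi_\a$, the elementary case split in (i), and the observation that $\phi_\a'$ is continuous for every $\a\ge 0$ (including at the origin), so the Mean Value Theorem applies on any closed interval uniformly in $\a\in[0,m]$. Everything else is the routine bookkeeping of constants indicated above.
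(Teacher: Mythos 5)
Your argument is correct and is exactly the Mean Value Theorem approach the paper has in mind (the paper leaves this proof to the reader as "a simple application of the Mean Value Theorem"), with the oddness reduction, the case split in configuration (i), and the opposite-sign case all handled soundly. One tiny bookkeeping fix: two of your lower bounds only give the factor $\tfrac12$, so you should take $c(m)=\max\{1+m,\,2^m,\,2\}$ rather than $\max\{1+m,\,2^m\}$ (this matters only when $m<1$).
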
 


For each number $\a\in [N,N+\frac{n-1}{n}]$ define $H_{\a}: \mathbb{G}_{\a} \to \R\times\{0\}\times\R \subset \R^{N+2}$ to be the mapping
\[ H_{\a}(x_1, x_2) = (x_1|x_1|^{\a},0,\dots,0, x_2).\]
It is known that $H_{\a}$ is an $\eta'$-quasisymmetric mapping with $\eta'$ depending only on $N,n$; see e.g. \cite[Theorem 2]{Ack}.

We are now ready to prove Theorem \ref{thm:main} for rational $\a\geq 0$. The argument in this case is analogous to those of \cite[Theorem 1.1]{Wu} and \cite[Theorem 5.1]{Wu2}.

\begin{prop}\label{prop:rational}
For all integers $N\geq 0$ and $n\geq 1$, there exists $L>1$ depending only on $N,n$ such that, for each rational $\a \in [N,N+\frac{n-1}{n}]$, there exists an $L$-bi-Lipschitz homeomorphism of $\mathbb{G}_{\a}$ onto a $2$-dimensional quasiplane $\mathcal{P}_{\a} \subset \R^{N+2}$.
\end{prop}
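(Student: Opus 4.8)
The plan is to set $\mathcal{P}_\a := F_\a(\R\times\{0\}\times\R)$, where $F_\a\colon\R^{N+2}\to\R^{N+2}$ is the quasiconformal self-homeomorphism furnished by Proposition \ref{prop:qsimilarity}; then $\mathcal{P}_\a$ is a $2$-dimensional quasiplane in $\R^{N+2}$ by definition. I would take $f_\a := F_\a\circ H_\a\colon\mathbb{G}_\a\to\mathcal{P}_\a$. Since $t\mapsto t|t|^\a$ is a self-homeomorphism of $\R$, the map $H_\a$ is a homeomorphism of $\mathbb{G}_\a$ onto $\R\times\{0\}\times\R$, so $f_\a$ is a homeomorphism onto $\mathcal{P}_\a$; moreover $f_\a$ is $\tilde\eta$-quasisymmetric with $\tilde\eta$ depending only on $N,n$, being the composition of the ($N,n$-quantitatively) quasisymmetric maps $H_\a$ and $F_\a$. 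It then remains to find $L=L(N,n)$ with $L^{-1}d_{\mathbb{G}_\a}(x,y)\le|f_\a(x)-f_\a(y)|\le Ld_{\mathbb{G}_\a}(x,y)$, and by Lemma \ref{lem:grushinQuasimetric} (applied with $m=N+1$, which is legitimate since $\a\le N+1$) it is enough to prove $|f_\a(x)-f_\a(y)|\simeq d_\a(x,y)$ with constants depending only on $N,n$.

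\textbf{Two model estimates.} Fix $x=(x_1,x_2)$, $y=(y_1,y_2)$ with $|x_1|\ge|y_1|$, and write $H_\a(x)=(u',u'')$, so that $|u'|=|x_1|^{1+\a}$. By Lemma \ref{lem:MVT} (with $m=N+1$) one has $|H_\a(x)-H_\a(y)|\simeq|x_1|^\a|x_1-y_1|+|x_2-y_2|$, which I would combine with the formula for $d_\a$ to record the identity $|H_\a(x)-H_\a(y)|\simeq|x_1|^\a d_\a(x,y)$, valid whenever $|x_2-y_2|\le|x_1|^{1+\a}$. First I would treat two configurations. (1) \emph{Comparable, off the singular line:} for a suitable $\d=\d(N,n)\in(0,1)$ determined by the constant of Lemma \ref{lem:MVT}, if $|x_1-y_1|\le\d|x_1|$ and $|x_2-y_2|\le\d|x_1|^{1+\a}$ then $|H_\a(x)-H_\a(y)|<\tfrac12|u'|$, so both image points lie in $B(H_\a(x),\tfrac12|u'|)$; Proposition \ref{prop:qsimilarity} then gives $|f_\a(x)-f_\a(y)|\simeq|x_1|^{-\a}|H_\a(x)-H_\a(y)|\simeq d_\a(x,y)$. (2) \emph{Near the singular line:} if $|x_2-y_2|\ge\tfrac12|x_1|^{1+\a}=\tfrac12\max\{|H_\a(x)'|,|H_\a(y)'|\}$, then Corollary \ref{cor:qsimilarity} gives $|f_\a(x)-f_\a(y)|\simeq|H_\a(x)-H_\a(y)|^{1/(1+\a)}$, and since $|x_1|^\a|x_1-y_1|\le2|x_1|^{1+\a}\le4|x_2-y_2|$ this is $\simeq|x_2-y_2|^{1/(1+\a)}\simeq d_\a(x,y)$.

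\textbf{Bridging.} The remaining configuration is $|x_2-y_2|<\tfrac12|x_1|^{1+\a}$ with $x,y$ failing the hypotheses of (1). Here the plan is a chaining argument: connect $x$ to $y$ by $x=w_0,w_1,\dots,w_k=y$ with $k=k(N,n)$ bounded, each consecutive pair falling under configuration (1) --- interpolating the first coordinate from $x_1$ toward $y_1$ in boundedly many steps of multiplicative ratio bounded away from $1$, then interpolating the second coordinate from $x_2$ to $y_2$ in boundedly many steps of size $\le\d(\text{current }|x_1|)^{1+\a}$ --- so that the explicit form of $d_\a$ makes the telescoping sum $\sum_i d_\a(w_{i-1},w_i)$ comparable to $d_\a(x,y)$. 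The upper bound $|f_\a(x)-f_\a(y)|\lesssim d_\a(x,y)$ then follows from the triangle inequality and configuration (1). For the matching lower bound I would use quasisymmetry instead: pick a single companion $x^*$, a bounded number of configuration-(1) steps away from $x$, with $|H_\a(x)-H_\a(x^*)|\simeq|H_\a(x)-H_\a(y)|$ and $d_\a(x,x^*)\simeq d_\a(x,y)$ --- possible thanks to the identity $|H_\a(x)-H_\a(y)|\simeq|x_1|^\a d_\a(x,y)$ recorded above --- so that $\tilde\eta$-quasisymmetry of $f_\a$ yields $|f_\a(x)-f_\a(y)|\simeq|f_\a(x)-f_\a(x^*)|$, and configuration (1) yields $|f_\a(x)-f_\a(x^*)|\simeq d_\a(x,x^*)\simeq d_\a(x,y)$.

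\textbf{Main obstacle.} I expect the bridging step to be the crux: arranging the interpolation so that the number of steps, the ratios, and all implied constants remain controlled by $N,n$ only --- in particular uniformly in $\a\in[N,p_{N,n}]$ --- so that $L$ does not degenerate. This is precisely where the $N,n$-uniformity built into Proposition \ref{prop:qsimilarity} and Lemmas \ref{lem:grushinQuasimetric} and \ref{lem:MVT} is indispensable; everything else is bookkeeping along the lines of \cite{Wu}.
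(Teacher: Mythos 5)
Your overall route is the same as the paper's: take $f_\a=F_\a\circ H_\a$ onto $\mathcal{P}_\a=F_\a(\R\times\{0\}\times\R)$, reduce to the quasidistance $d_\a$ via Lemma \ref{lem:grushinQuasimetric} and Lemma \ref{lem:MVT}, use the local bi-Lipschitz property \eqref{eq:quasisim2} when the image points lie in a common ball $B(H_\a(x),\tfrac12|x_1|^{1+\a})$, use Corollary \ref{cor:qsimilarity} when $|x_2-y_2|\gtrsim|x_1|^{1+\a}$, and in the intermediate regime compare with a companion point via quasisymmetry --- this last step is exactly the paper's argument in its Cases I--II. The only problematic ingredient is the chaining you propose for the upper bound in the bridging case: as described (``boundedly many steps of multiplicative ratio bounded away from $1$'') it cannot reach points with $|y_1|\ll|x_1|$ (or $y_1=0$, which is allowed in that regime), and if one first descends in the $x_1$-coordinate, interpolating $x_2$ in steps of size $\le\d|y_1|^{1+\a}$ makes the telescoped sum of order $|x_2-y_2|/|y_1|^\a$, which need not be $\lesssim d_\a(x,y)\simeq|x_1|$. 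Fortunately the chaining is superfluous: since in the bridging regime $|H_\a(x)-H_\a(y)|\simeq|x_1|^{1+\a}\simeq|H_\a(x)-H_\a(x^*)|$ and $d_\a(x,x^*)\simeq|x_1|\simeq d_\a(x,y)$, applying the quasisymmetry of $F_\a$ to both orderings of the triple gives the two-sided estimate $|f_\a(x)-f_\a(y)|\simeq|f_\a(x)-f_\a(x^*)|\simeq d_\a(x,y)$, i.e.\ your companion-point argument alone yields both bounds (as in the paper), so you should simply drop the chaining step.
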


\begin{proof}
Fix a rational $\a\in [N,N+\frac{n-1}n]$ and let $\lambda$ and $F_{\a}: \R^{N+2} \to \R^{N+2}$ be the constant and $\eta$-quasisymmetric map, respectively, of Proposition \ref{prop:qsimilarity} with $\lambda$ and $\eta$ depending only on $N,n$. The composition $F_{\a} \circ H_{\a}$ is a homeomorphism from $\mathbb{G}_{\a}$ onto the quasiplane $\mathcal{P}_{\a} = F_{\a}(\R\times\{0\}\times\R)$. We show that $F_{\a} \circ H_{\a}$ is $L$-bi-Lipschitz with $L$ depending only on $\lambda$, the quasisymmetric data $\eta,\eta'$ of $F_{\a},H_{\a}$, respectively, the constant $C(N+\frac{n-1}{n})$ of Lemma \ref{lem:grushinQuasimetric} and the constant $c(N+\frac{n-1}{n})$ of Lemma \ref{lem:MVT}; thus $L$ depends only on $N,n$. The comparison constants below depend at most on $N,n$. 

Let $x = (x_1, x_2)$, $y = (y_1, y_2)$ be points in $\mathbb{G}_{\a}$ and assume that $|x_1| \geq |y_1|$. 
The proof splits into four cases.  

\emph{Case I.} $|x_1| > 0$, $|x_1 - y_1| \leq |x_1|/4$, and $|x_2 - y_2| \leq |x_1|^{1+\a}/2$. Then, $|x_1| \simeq |y_1|$ and the Grushin distance satisfies $d_{\mathbb{G}_{\a}}(x,y) \simeq |x_1 - y_1| + |x_1|^{-\a}|x_2-y_2|$. Moreover, by Lemma \ref{lem:MVT}, $|H_{\a}(x)-H_{\a}(y)| \simeq |x_1|^{\a}|x_1-y_1| + |x_2-y_2|$. 

If $H_{\a}(y) \in B(H_{\a}(x),\frac{1}{2}|x_1|^{1+\a})$ then Proposition \ref{prop:qsimilarity} yields $|F_{\a} \circ H_{\a}(x) - F_{\a} \circ H_{\a}(y)| \simeq |x_1-y_1| + |x_1|^{-\a}|x_2 - y_2| \simeq d_{\mathbb{G}_{\a}}(x,y)$. 

Otherwise, $|H_{\a}(x)-H_{\a}(y)| \simeq |x_1|^{1+\a}$. Let $z\in\mathbb{G}_{\a}$ such that $|H_{\a}(x)-H_{\a}(z)| = |x_1|^{1+\a}/2$. Then the quasisymmetry of $F_{\a}$ and $H_{\a}$ implies $|F_{\a} \circ H_{\a}(x) - F_{\a} \circ H_{\a}(y)| \simeq |F_{\a} \circ H_{\a}(x) - F_{\a} \circ H_{\a}(z)| \simeq d_{\mathbb{G}_{\a}}(x,z) \simeq d_{\mathbb{G}_{\a}}(x,y)$.

\emph{Case II.} $|x_1| > 0$, $|x_1 - y_1| \geq |x_1|/4$, and $|x_2 - y_2| \leq |x_1|^{1+\a}/2$. Then, $d_{\mathbb{G}_{\a}}(x,y) \simeq |x_1 - y_1| \simeq |x_1|$ and, by Lemma \ref{lem:MVT}, $|H_{\a}(x)-H_{\a}(y)| \simeq |x_1|^{1+\a}$. Similar to the second part of Case I, $|F_{\a} \circ H_{\a}(x) - F_{\a} \circ H_{\a}(y)| \simeq d_{\mathbb{G}_{\a}}(x,y)$.

\emph{Case III.} $|x_1| > 0$ and $|x_2 - y_2| \geq |x_1|^{1+\a}/2$. Then, $d_{\mathbb{G}_{\a}}(x,y) \simeq |x_1 - y_1| + |x_2 - y_2|^{1/(1+\a)} \simeq |x_2 - y_2|^{1/(1+\a)}$. 
By Corollary \ref{cor:qsimilarity}, $|F_{\a} \circ H_{\a}(x) - F_{\a} \circ H_{\a}(y)| \simeq |H_{\a}(x) - H_{\a}(y)|^{\frac{1}{1+\a}} \simeq |x_2-y_2|^{\frac{1}{1+\a}} \simeq d_{\mathbb{G}_{\a}}(x,y)$.

\emph{Case IV.} $x_1 = 0$. Then, $|F_{\a} \circ H_{\a}(x) - F_{\a} \circ H_{\a}(y)| \simeq d_{\mathbb{G}_{\a}}(x,y)$ by taking limits in Case III.
\end{proof}

\subsection{Proof of Theorem \ref{thm:main} when $\a$ is irrational}\label{sec:irrationalproof}

The following lemma deals with the bi-Lipschitz embeddability of $\mathbb{G}_{\a}$ into $\R^{[\a]+2}$ for all real $\a\geq 0$.

\begin{lem}\label{lem:BLembed}
For all integers $N\geq 0$ and $n\geq 1$, there exists $L>1$ depending only on $N,n$ such that for all $\a \in [N,N+\frac{n-1}{n}]$ there exists an $L$-bi-Lipschitz embedding $f_{\a} : \mathbb{G}_{\a} \to \R^{N+2}$.
\end{lem}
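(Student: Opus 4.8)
The plan is to realize $f_\a$, for an arbitrary real $\a\in[N,N+\frac{n-1}{n}]$, as a locally uniform limit of the bi-Lipschitz homeomorphisms supplied by Proposition~\ref{prop:rational} for nearby rational exponents. The guiding observation is that every generalized Grushin plane $\mathbb{G}_{\a'}$ has the \emph{same} underlying set $\R^2$ and the same (Euclidean) topology, even though the metric changes with $\a'$; this is what allows a single Arzel\`a--Ascoli argument to be run on $\R^2$. The case $n=1$ requires no argument, since then $\a=N$ is rational; so I would assume $n\geq 2$ and choose rationals $\a_k\in[N,N+\frac{n-1}{n}]$ with $\a_k\to\a$.

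First I would fix, for each $k$, the $L$-bi-Lipschitz homeomorphism $f_k:=f_{\a_k}$ of Proposition~\ref{prop:rational}, regard it as a map $\R^2\to\R^{N+2}$, and post-compose with a translation so that $f_k(0)=0$; this leaves the bi-Lipschitz constant unchanged. Writing $m=N+\frac{n-1}{n}$ and letting $C(m)$ be the constant of Lemma~\ref{lem:grushinQuasimetric}, for all $x,y\in\R^2$
\[
 |f_k(x)-f_k(y)|\le L\,d_{\mathbb{G}_{\a_k}}(x,y)\le L\,C(m)\,d_{\a_k}(x,y),
\]
and $d_{\a_k}(x,y)\le|x_1-y_1|+|x_2-y_2|^{1/(1+\a_k)}$ directly from the definition of the Grushin quasidistance. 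Since the exponents $1/(1+\a_k)$ lie in the fixed compact subinterval $[\tfrac{1}{1+m},\tfrac{1}{1+N}]$ of $(0,1]$, these bounds are uniform in $k$, bounded on Euclidean-bounded sets, and tend to $0$ with $|x-y|$; together with $f_k(0)=0$ this shows $\{f_k\}$ is uniformly bounded and equicontinuous on every compact subset of $\R^2$. By Arzel\`a--Ascoli, applied along an exhaustion of $\R^2$ by closed balls together with a diagonal extraction, a subsequence (still denoted $f_k$) converges locally uniformly to a continuous map $f_\a\colon\R^2\to\R^{N+2}$.

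It then remains to pass the two-sided bound to the limit. For fixed $x,y$, Lemma~\ref{lem:grushinQuasimetric} gives $\tfrac{1}{L\,C(m)}\,d_{\a_k}(x,y)\le|f_k(x)-f_k(y)|\le L\,C(m)\,d_{\a_k}(x,y)$ for every $k$. Letting $k\to\infty$ and using that $\a\mapsto d_\a(x,y)$ is continuous (the defining formula for $d_\a$ depends continuously on $\a$) while $|f_k(x)-f_k(y)|\to|f_\a(x)-f_\a(y)|$, the same bound holds with $d_\a$ in place of $d_{\a_k}$; a second application of Lemma~\ref{lem:grushinQuasimetric} replaces $d_\a$ by $d_{\mathbb{G}_\a}$ at the cost of one more factor $C(m)$. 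Hence $f_\a$ is $L'$-bi-Lipschitz from $\mathbb{G}_\a$ into $\R^{N+2}$ with $L'=L\,C(m)^2$ depending only on $N,n$, and the lower bound forces $f_\a$ to be injective, so it is the required embedding.

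The one genuinely delicate point — the main obstacle — is precisely that the spaces $\mathbb{G}_{\a_k}$ are pairwise distinct metric spaces, so ``convergence of $f_k$'' must be set up against a fixed structure; the device that resolves this is to carry out the compactness argument with respect to the common Euclidean metric on $\R^2$ and to transport the bi-Lipschitz estimates through the Grushin quasidistances $d_{\a_k}$, which are both uniformly comparable to $d_{\mathbb{G}_{\a_k}}$ (Lemma~\ref{lem:grushinQuasimetric}, the constant being stable in $\a$) and pointwise convergent to $d_\a$. I would also note that running the same normal-family reasoning on the quasiconformal maps $F_{\a_k}$ of Proposition~\ref{prop:qsimilarity} — which have uniformly bounded distortion, hence form a normal family — produces a $K$-quasiconformal limit $F_\a$ with $\mathcal{P}_\a=F_\a(\R\times\{0\}\times\R)$ a quasiplane; since the local estimate \eqref{eq:quasisim2} passes to the limit with the same $\lambda$, this upgrades the conclusion to the full statement of Theorem~\ref{thm:main} for irrational $\a$.
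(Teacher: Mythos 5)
Your proposal is correct and takes essentially the same route as the paper: both prove the lemma by an Arzel\`a--Ascoli limiting argument, extracting a limit of the $L$-bi-Lipschitz maps of Proposition~\ref{prop:rational} along rationals tending to $\a$ and passing the two-sided estimate to the limit. The only minor differences are in bookkeeping: you get equicontinuity on Euclidean compacts and transfer the bounds through the quasidistance of Lemma~\ref{lem:grushinQuasimetric} (which is manifestly continuous in $\a$, at the harmless cost of ending with constant $L\,C(m)^{2}$), whereas the paper runs a diagonal argument on a countable dense subset of $\mathbb{G}_{\a}$ and uses the pointwise convergence $d_{\mathbb{G}_{q_k}}\to d_{\mathbb{G}_{\a}}$ to keep the constant $L$.
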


\begin{proof}
Fix a number $\a \in [N,N+\frac{n-1}{n}]$ and let $(q_k)_{k\in\N}$ be a sequence of rational numbers in $[N,N+\frac{n-1}{n}]$ converging to $\a$. Note that $\lim_{k\to\infty}d_{\mathbb{G}_{q_k}}(x,y) = d_{\mathbb{G}_{\a}}(x,y)$ for each $x,y\in\R^{2}$. By Proposition \ref{prop:rational}, there exists $L>1$ depending only on $N,n$ such that, for each $q_k$, there is an $L$-bi-Lipschitz map $f_{q_k} : \mathbb{G}_{q_k} \to \R^{N+2}$. It is clear by their construction that each $f_{q_k}$ maps $(0,0)$ to $(0,\dots,0) \in\R^{N+2}$.

Let $\mathscr{A}= \{a_1,a_2,\dots\}$ be a countable dense set in $(\mathbb{G}_{\a},d_{\mathbb{G}_\a})$. Note that, for each $i\in\N$, $|f_{q_k}(a_i)| \leq L d_{\mathbb{G}_{q_k}}(a_i,(0,0))$. Hence, for each $i\in\N$, the sequence $(f_{q_k}(a_i))_{k\in\N}$ is bounded. Define, for each $i\in\N$, a subsequence of $(f_{q_k})_{k\in\N}$ as follows. Set $(f^0_k)_{k\in\N} = (f_{q_k})_{k\in\N}$ and for each $i \in\N$ let $(f^i_k)_{k\in\N}$ be a subsequence of $(f^{i-1}_k)_{k\in\N}$ so that $(f^i_k(a_i))_{k\in\N}$ converges. Then, for each $a_i\in \mathscr{A}$, the sequence $(f^k_k(a_i))_{k\in\N}$ converges. Set $f(a_i) = \lim_{k\to\infty} f^k_k(a_i)$. 

We claim that $f: (\mathscr{A},d_{\mathbb{G}_{\a}}) \to \R^{N+2}$ is $L$-bi-Lipschitz. Let $z_1,z_2 \in \mathscr{A}$ and $\e>0$. Choose $k\in\N$ big enough so that
\begin{equation}\label{eq:BL1}
|f^k_k(z_i)-f(z_i)| \leq \frac{\e}{3} \quad \text{ for each } i=1,2
\end{equation}
and if $f^k_k = f_{q(k)}$ for some $q(k)\in\{q_1,q_2,\dots\}$ then 
\begin{equation}\label{eq:BL2}
 |d_{\mathbb{G}_{q(k)}}(z_1,z_2) - d_{\mathbb{G}_{\a}}(z_1,z_2)| \leq \frac{\e}{3L}.
\end{equation}
Combining (\ref{eq:BL1}) and (\ref{eq:BL2}) we have that
\begin{equation*}
|f(z_1)-f(z_2)|\leq Ld_{\mathbb{G}_{\a}}(z_1,z_2) + \e.
\end{equation*}
Similarly, $|f(z_1)-f(z_2)| \geq \frac{1}{L}d_{\mathbb{G}_{\a}}(z_1,z_2) - \e$. Since $\e$ is arbitrary, the claim follows.

Using the density of $\mathscr{A}$ in $\mathbb{G}_{\a}$, the mapping $f$ can be extended to all $\mathbb{G}_{\a}$ uniquely. It remains to show that $f: \mathbb{G}_{\a} \to \R^{N+2}$ is $L$-bi-Lipschitz. Let $x_1,x_2 \in \mathbb{G}_{\a}$ and $\e>0$. Find $z_1,z_2\in \mathscr{A}$ such that for each $i=1,2$, $d_{\mathbb{G}_{\a}}(x_i,z_i) < \frac{\e}{4L}$ and $|f(x_i)-f(z_i)| < \frac{\e}{4}$. Then,
\begin{align*}
|f(x_1)-f(x_2)| \leq L d_{\mathbb{G}_{\a}}(z_1,z_2) + \frac{\e}{2}
                 \leq L d_{\mathbb{G}_{\a}}(x_1,x_2) + \e.
\end{align*}
Similarly, $|f(x_1)-f(x_2)| \geq \frac{1}{L}d_{\mathbb{G}_{\a}}(x_1,x_2) - \e$. Since $\e$ is arbitrary, $f$ is $L$-bi-Lipschitz.
\end{proof}

We now prove Theorem \ref{thm:main}.

\begin{proof}[{Proof of Theorem \ref{thm:main}}]
Let $\a\in[N,N+\frac{n-1}{n}]$ and $(q_k)$ be a sequence of rationals in $[N,N+\frac{n-1}{n}]$ converging to $\a$ such that the $L$-bi-Lipschitz maps $f_{q_k} = F_{q_k} \circ H_{q_k}$ converge to an $L$-bi-Lipschitz map $f_{\a}: \mathbb{G}_{\a} \to \R^{N+2}$ as in the proof of Lemma \ref{lem:BLembed}. Here $F_{q_k}:\R^{N+2} \to \R^{N+2}$ is the quasisymmetric mapping of Proposition \ref{prop:qsimilarity}, $H_{q_k}(x,y) = (x|x|^{q_k},0,\dots,0,y)$ is the quasisymmetric mapping of $\mathbb{G}_{q_k}$ onto $\R^2$ and $L$ depends only on $N,n$. Note that the mappings $H_{q_k}$ converge pointwise to the mapping $H_{\a} = (x|x|^{\a},0,\dots,0,y)$ and that the mappings $F_{q_k}$ fix the origin of $\R^{N+2}$ and the vector $(0,\dots,0,1)$. By \cite[Corollary 10.30]{Heinonen}, passing to a subsequence, we may assume that $F_{q_k}$ converges to a quasisymmetric mapping $F_{\a}$. Then, $f_{\a} = H_{\a}\circ F_{\a}$, and the image of $f_{\a}$ is $F_{\a}(\R\times\{0\}\times\R)$ which is a $2$-dimensional quasiplane in $\R^{N+2}$.
\end{proof}

\section{Appendix} \label{sec:paths}
This section gives the construction of the paths $J_I(N,n),J_L(N,n)$ used in Section \ref{sec:cores} and the proof of Lemma \ref{lem:BLext}. In Section \ref{sec:firstpaths}, we construct for each integer $N\geq 0$ and each integer $M = 4k+5\geq 9$ paths $\mathcal{J}^{N}_I(M)$, $\mathcal{J}^{N}_I(M)$ which serve as a base for the construction of paths $J_I(N,n),J_L(N,n)$ in Section \ref{sec:pathsN}. Then, in Section \ref{sec:exten_proof} we show Lemma \ref{lem:BLext}.

\subsection{Auxiliary paths}\label{sec:firstpaths}
Let $N\geq 0$ and $M= 4k+5 \geq 9$ be integers. The paths $\mathcal{J}^{N}_I(M), \mathcal{J}^{N}_I(M)$ are defined by induction on $N$.

For an integer $M= 4k+5 \geq 9$ let $\mathcal{J}^{0}_I(M)$ be the segment $I\subset\R^2$ which we divide into $M$ disjoint $I$-segments $\ell_m$ of length $1/M$. Similarly, let $\mathcal{J}^{0}_L(M)$ be the segment $L\subset\R^2$ which we divide into $M$ disjoint $I$- and $L$-segments $\ell_m$ of length $1/M$ where $\ell_{\frac{M-1}{2}}$ is an $L$-segment and the rest are $I$-segments.

To obtain $\mathcal{J}^{1}_I(M)$, replace each pair of $I$-segments $\ell_{m}\cup\ell_{m+1}$, where $m \in \{2, 4, \dots, \frac{M-5}{2}, \frac{M+5}{2},\dots, M-4, M-2\}$, by a swath containing $\frac{M-1}{2}$ $I$- and $L$-segments of length $1/M$ running in the negative $x_1$-direction; see Figure \ref{figure:IPath} for a schematic representation. Precisely, $\mathcal{J}^{1}_I(M)$ contains $\frac{M-5}{2}$ swaths and each swath contains $4$ $L$-segments and $\frac{M-5}{2}$ pairs of consecutive $I$-segments. Here we make use of the fact that $M=4k+5$.

To obtain $\mathcal{J}^{2}_I(M)$ replace each of the $(M-5)^2/4$ pairs of consecutive $I$-segments in $\mathcal{J}^{2}_I(M)$ by a swath containing $\frac{M-1}{2}$ many $I$- and $L$-segments of length $1/M$ running in the positive $x_3$-direction; see Figure \ref{figure:IPath}. Note that $\mathcal{J}^{2}_I(M)$ contains $(M-5)^2/4$ new swaths, each containing $\frac{M-5}{2}$ pairs of consecutive $I$-segments.

Proceeding inductively, we obtain for each integer $N\geq 0$ and each integer $M = 4k+5\geq 9$ a path $\mathcal{J}^{N}_I(M)$. Denote by $(\#\mathcal{J}^{N}_I(M))$ the total number of $I$- and $L$-segments in $\mathcal{J}^{N}_I(M)$, and by $(\#\mathcal{J}^{N}_I(M))^*$ the total number of pairs of consecutive $I$-segments. Then, $(\#\mathcal{J}^{0}_I(M)) = M$, $(\#\mathcal{J}^{0}_I(M))^* = \frac{M-5}{2}$ and for $N\geq 1$
\begin{eqnarray*}
&(\#\mathcal{J}^{N}_I(M)) &= (\#\mathcal{J}^{N-1}_I(M)) + (\#\mathcal{J}^{N-1}_I(M))^*(M-3)\\
&(\#\mathcal{J}^{N}_I(M))^* &= (\#\mathcal{J}^{N-1}_I(M))^* \frac{M-5}{2}.
\end{eqnarray*} 
Therefore,
\[ (\#\mathcal{J}^{N}_I(M)) = M + (M-3)(M-5)\frac{(M-5)^{N}-2^N}{2^{N+1}(M-7)}\]
and
\[ (\#\mathcal{J}^{N}_I(M))^* = \frac{(M-5)^{N+1}}{2^{N+1}}.\]

The paths $\mathcal{J}^{N}_L(M)$ are constructed similarly; see Figure \ref{figure:IPath}.

\begin{figure}[h]
\centering 
{
\includegraphics[width=1.3in]{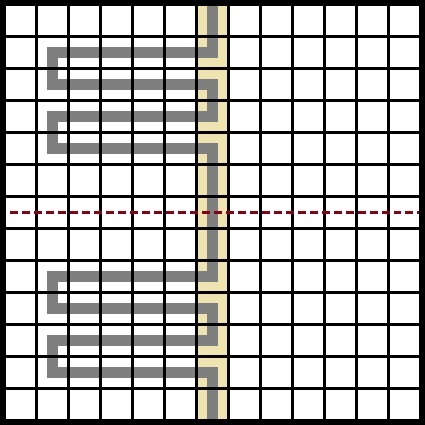}
}
\hspace{.1in}
{
\includegraphics[width=1.3in]{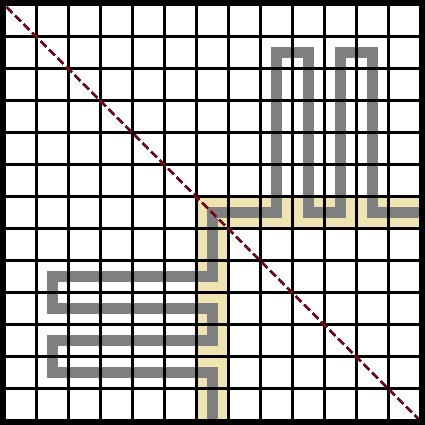}
}
\hspace{.1in}
{
\includegraphics[width=1.3in]{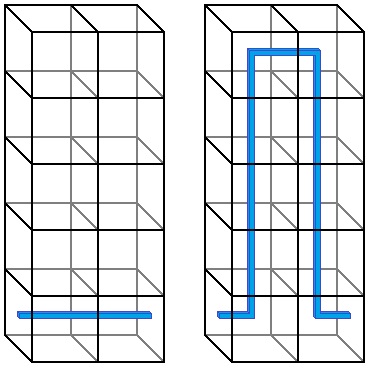}
}
\caption{The paths $\mathcal{J}^0_I(M)$, $\mathcal{J}^0_L(M)$ and a swath in the extra dimension.} \label{figure:IPath}
\end{figure}    

\subsection{Construction of the paths $J_I,J_L$} \label{sec:pathsN} 
Fix integers $N\geq 0$ and $n\geq 1$ and set $M = M_{N,n}=9^{n(N+2)}$. We first construct paths $\tilde{J}_I(N,n)$ and $\tilde{J}_L(N,n)$ as an extension of $\mathcal{J}^N_{I}$ and $\mathcal{J}^N_{L}$, respectively, in an extra dimension. The required paths $J_I(N,n)$ and $J_L(N,n)$ are obtained after applying a suitable rotation to $\tilde{J}_I(N,n)$ and $\tilde{J}_L(N,n)$ respectively.

We work first for $\tilde{J}_{I}(N,n)$. To construct $\tilde{J}_I(N,n)$ we use the path $\mathcal{J}^{N}_I(M)$ which contains $M' = (\#\mathcal{J}^{N}_I(M))^* = 2^{-N-1}(M-5)^{N+1}$ pairs of consecutive $I$-segments. Replace each pair of $I$-segments $\ell_m\cup \ell_{m+1}$ in $\mathcal{J}^{N}_I(M)$, $m=1,\dots,M'$, by a swath consisting of $2k_m+2$ many $I$- and $L$-segments, running in the positive $x_{N+2}$-direction. Here, $0 \leq k_m \leq \frac{M-3}2$ (if $k_m = 0$ then the swath contains only $\ell_m,\ell_{m+1}$ and if $k_m\geq 1$ then it contains $4$ $L$-segments and $2(k_m-1)$ $I$-segments). The resulting path is denoted by $\tilde{J}_I(N,n)$. Moreover we require that the swaths are chosen in such a way that $\tilde{J}_I(N,n)$ is symmetric with respect to the plane $x_{N+1} = 1/2$. Hence, for each $m\in \{1,\dotsm, M'\}$ there is $m' \in \{1,\dots,M'\}$, $m\neq m'$ such that $k_m = k_{m'}$.

The path $\tilde{J}_I(N,n)$ must consist of $M^{1+p_{N,n}} = M^{N+2-1/n}$ many $I$- and $L$-segments of length $1/M$. Thus, we require that
\[ 2(k_1 + k_2 +\cdots + k_{M'}) + 2M' + ((\#\mathcal{J}^{N}_I(M)) - 2M') = M^{N+2-1/n}\]
or equivalently
\begin{equation}\label{eq:paths}
k_1 + k_2 +\cdots + k_{M'} = \frac{M^{N+2-1/n} - (\#\mathcal{J}^{N}_I(M))}{2}.
\end{equation}
The symmetry of $\tilde{J}_I(N,n)$ implies that the left hand side of (\ref{eq:paths}) is even. Moreover, since $M$ is a multiple of $9$, the right hand side of (\ref{eq:paths}) is also even. Since each $k_m$ can take any integer value in $[0,M-3]$, the left hand side of (\ref{eq:paths}) can take any even integer value in $[0,2(M-3)M']$ and it is enough to show that
\[ 2(M-3)M' \geq \frac{M^{N+2-1/n} - (\#\mathcal{J}^{N}_I(M))}{2}.\]
Indeed, since $M = 9^{n(N+2)}$,
\[ 2(M-3)M' = \frac{(M-3)(M-5)^{N+1}}{2^N} \geq \left ( \frac{M-5}{2} \right )^{N+2} \geq M^{N+2-\frac{1}{n}}.\]

Properties (1)--(4) of Section \ref{sec:cores} are immediate. The proof of property (5) is almost identical to the proof of Lemma \ref{lem:BLext} in the following section. The path $\tilde{J}_{L}(N,n)$ is obtained similarly. In this case we require symmetry with respect to the plane $x_1+x_{N+1}= \frac{1}{2}$.

\subsection{Proof of Lemma \ref{lem:BLext}} \label{sec:exten_proof}

We show Lemma \ref{lem:BLext} for $z=p$ and $Q=Q_I$. Similar arguments apply when $Q=Q_L$. For the rest, $\mathcal{F} = \mathcal{F}_0$.

By Section \ref{sec:pathsN}, each $J_I(N,n)$ is constructed as a sequence of paths $I = J_1,J_2,\dots,J_{N+2} = J_I(N,n)$ where each $J_k$ lies in a $k$-dimensional subspace of $\R^{N+2}$ and $J_{k+1}$ is constructed by replacing pairs of $I$-segments $\ell_m \cup \ell_{m+1}$ of $J_k$ by swaths $\mathscr{S} = I_m\cup\mathscr{S}_m\cup I_{m+1}'$. Here, $I_m\subset\ell_m$, $I_{m+1}'\subset\ell_{m+1}$ are line segments and $\mathscr{S}_m$ is a polygonal arc perpendicular to the $k$-plane containing $J_k$. Associated to each $J_k$ we consider a core $\kappa_k = \mathcal{T}^{N+2}(J_k,\frac{M-2}{M^2})$. 

Each core $\kappa_k$ consists of $M_k$ many $I$- and $L$-blocks $Q_{k,m}$, $m=1,\dots,M_k$. Here $M_k = (\#\mathcal{J}^{k}_I(M))$, if $k=0,\dots,N+1$, and $M_{N+2}=M^{1+p_{N,n}}$ with $M=9^{n(N+2)}$. Similar to the path $J_k$, each core $\kappa_k$ is constructed by removing certain pairs of $I$-blocks from $\kappa_{k-1}$ and replacing these pairs by solid swaths $\mathcal{S} = \mathcal{T}^{N+2}(\mathscr{S},\frac{M-2}{M^2})$. Note that $\kappa_1 = \kappa_0(Q)$.

For each $k,m$ the side $\text{s}(Q_{k,m})$ has a fibration into $I$-segments (if $Q_{k,m}$ is an $I$-block) or $L$-segments (if $Q_{k,m}$ is an $L$-block) similar to the fibrations $\{I_x\},\{L_x\}$ of Section \ref{sec:BLmaps}. The fibrations of the sides of $Q_{k,m}$ induce a fibration of the side $\text{s}(\kappa_k) = \bigcup_{u} \Gamma_{k,u}$ where $\Gamma_{k,u}$ is a polygonal arc, $u\in \partial\mathfrak{C}^{N+1}$ and $\Gamma_{k,u}\cap \text{s}(Q_{k,m})$ is a fiber of $\text{s}(Q_{k,m})$. As with the paths $J_k$, each $\G_{k+1,u}$ is constructed by replacing certain line segments of $\G_{k,u}$ by fibers which lie on the new solid swaths of $\k_{k+1}$. Note that if $u$ is a vertex of $\mathfrak{C}^{N+1}$ then $\G_{k,u}$ is an edge of $\kappa_k$ and $\G_{N+2,u}$ is an element of the flag-path $w_{\mathcal{F}_0}$ of $ \kappa_p(Q_I)$.

For the construction of $\Theta_p^{\mathcal{F}_0}$ we first map $\tau_p(Q)$ onto $\tau_{0}(Q)$ and then we compose with $\Theta_0^{\mathcal{F}_0}$. 

\emph{Step 1: We map $(Q,\kappa_{N+2})$ onto $(Q,\kappa_{1})$}. We construct a bi-Lipschitz map in $Q$ which fixes $\partial Q$ and maps $\kappa_{N+2}$ onto $\kappa_{N+1}$ by compressing each solid swath onto the two $I$-blocks of $\kappa_{N+1}$ which it replaced. The map is defined in a neighbourhood of each solid swath.

For each solid swath $\mathcal{S} \subset \kappa_{N+2}$, consider a $(N+2)$-box $\widetilde{Q}(\mathcal{S}) \subset Q$ which contains $\mathcal{S}$ and satisfies the following properties,
\begin{enumerate}
\item each face of $\widetilde{Q}(\mathcal{S})$ is parallel to a coordinate $(N+1)$-hyperplane;  
\item $\widetilde{Q}(\mathcal{S}) \cap \kappa_{N+2} = \mathcal{S}$;
\item $\widetilde{Q}(\mathcal{S})$ and $\widetilde{Q}(\mathcal{S}')$ have disjoint interiors if $\mathcal{S} \neq \mathcal{S}'$.
\end{enumerate}
For each solid swath $\mathcal{S} \subset \kappa_{N+2}$ we construct a bi-Lipschitz isotopy $\Phi_{\mathcal{S}}: \widetilde{Q}(\mathcal{S}) \times I \to \widetilde{Q}(\mathcal{S})$ such that $\Phi_{\mathcal{S}}(\cdot, t)|\partial \widetilde{Q}(\mathcal{S}) = \text{id}$ for all $t \in [0,1]$, $\Phi_{\mathcal{S}}(\cdot, 0) = \text{id}$, and $\Phi_{\mathcal{S}}(\cdot, 1)|\mathcal{S}$ is a PL bi-Lipschitz map of $\mathcal{S}$ onto the two $I$-blocks of $\kappa_{N+1}$ that $\mathcal{S}$ replaced. By PL bi-Lipschitz isotopy, we mean that the induced mapping $g_{t_1t_2} = \Phi_{\mathcal{S}}(\Phi_{\mathcal{S}}^{-1}(\cdot , t_1),t_2)$ is piecewise linear and $(1+C|t_2-t_1|)$-bi-Lipschitz for some constant $C>0$ and all $t_1, t_2 \in [0,1]$. Note that $g_{t_1t_2}^{-1} = g_{t_2t_1}$.

Fix a solid swath $\mathcal{S} \subset \kappa_{N+2}$ and write $\widetilde{Q}(\mathcal{S}) = \widetilde{Q}$ and $\Phi_{\mathcal{S}} = \Phi$. Suppose that $\mathcal{S} = Q_1'\cup\cdots\cup Q_{2m}'$ where $Q_{i}'$ are blocks of $\kappa_{N+2}$ and that $\mathcal{S}$ has replaced two $I$-blocks $Q_1\cup Q_2$ of $\kappa_{N+1}$.

If $m=1$ then $Q_1=Q_1'$, $Q_2=Q_2'$ and $\Phi$ is the identity in $\widetilde{Q}$.

Suppose now that $m\geq 2$. We write $Q_i = \mathcal{T}^{N+2}(\ell_i,\frac{M-2}{M^2})$ and $Q_j' = \mathcal{T}^{N+2}(\ell_j',\frac{M-2}{M^2})$ for $i=1,2$ and $j=1,\dots,2m$ where $\ell_1,\ell_2$ are $I$-segments, $\ell_i'$ is an $L$-segment when $i=1,m,m+1,2m$ and an $I$-segment otherwise. Let $\widehat{\ell}$ be an $I$-segment of length $\frac{1}{10M}$ intersecting both $\ell_1$ and $\ell_2$. Define $\widehat{\ell}_1 = \ell_1\setminus \widehat{\ell}$, $\widehat{\ell}_{2m} = \ell_2\setminus \widehat{\ell}$ and $\{\widehat{\ell}_j\}_{j=1}^{2m-1}$ be a partition of $\widehat{\ell}$ into $I$-segments of length $\frac{1}{(2m-2)10M}$.

Let $\Phi: \partial(\widetilde{Q}\setminus \mathcal{S}) \times I \to \widetilde{Q}$ be a PL bi-Lipschitz isotopy on $\partial(\widetilde{Q}\setminus \mathcal{S})$ such that $\Phi(\cdot, t)|\partial \widetilde{Q} = \text{id}$ for all $t \in [0,1]$, $\Phi(\cdot, 0) = \text{id}$, and $\Phi(\cdot, 1)|\partial\mathcal{S}$ maps each $Q_{j}'$ onto $\widehat{Q}_j = \mathcal{T}(\widehat{\ell}_j, \frac{M-2}{M^2})$, while each $\G_{N+2,u}\cap Q_j'$ is mapped onto $\G_{N+1,u}\cap \widehat{Q}_j$ by arc-length parametrization. Let $\Sigma_t = \Phi(\partial(\widetilde{Q}\setminus \mathcal{S}),t)$. 

We use the following theorem of V\"ais\"al\"a on bi-Lipschitz extensions. 

\begin{thm}[{\cite[Corollary 5.20]{Vais:86}}]\label{thm:vais_ext}
Let $n\geq 2$ and $\Sigma \subset \R^n$ be a compact piecewise linear manifold of dimension $n$ or $n-1$ with or without boundary. Then, there exist $L',L>1$ depending on $\Sigma$, such that every $L$-bi-Lipschitz embedding $f: \Sigma \to \mathbb{R}^n$ extends to an $L'$-bi-Lipschitz map $F: \mathbb{R}^n \to \mathbb{R}^n$.    
\end{thm}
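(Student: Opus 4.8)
\emph{Overall idea.} The hypothesis forces $L$ to be close to $1$, and under that smallness $f$ is globally close to a rigid motion; the extension can then be written down by hand inside a fixed tubular neighbourhood of $\Sigma$, bypassing PL ambient-isotopy machinery. Write $L=1+\e$. A stability-of-isometries estimate---the quantitative rigidity of near-isometries applied to a fixed $(n+1)$-tuple of affinely independent points of $\Sigma$, whose pairwise distances $f$ nearly preserves---produces, once $\e\le\e_0=\e_0(\Sigma)$, a Euclidean isometry $A$ of $\R^n$ with $\sup_{\Sigma}|f-A|\le C(\Sigma)\,\e$. Post-composing the eventual extension with $A$ affects neither hypothesis nor conclusion, so we may assume $A=\mathrm{id}$; thus $\|f-\iota_\Sigma\|_\infty\le C(\Sigma)\,\e$ and $f$ is still $(1+\e)$-bi-Lipschitz.

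\emph{The codimension-one case.} Suppose $\dim\Sigma=n-1$ and $\Sigma$ is two-sided. As a compact PL hypersurface, $\Sigma$ carries a fixed tubular neighbourhood $N\subset\R^n$ and a bi-Lipschitz chart $\Psi\colon\Sigma\times[-1,1]\to N$ with $\Psi(\cdot,0)=\iota_\Sigma$, all of whose moduli depend only on $\Sigma$; this is the one place PL-ness, rather than mere Lipschitz structure, is used. For $\e$ small, $f(\Sigma)\subset\Psi(\Sigma\times(-\tfrac12,\tfrac12))$, the shadow map $\beta:=\mathrm{pr}\circ\Psi^{-1}\circ f\colon\Sigma\to\Sigma$ is $O(\e)$-close to $\mathrm{id}$ and $(1+O(\e))$-bi-Lipschitz, hence a homeomorphism, and writing $\Psi^{-1}(f(x))=(\beta(x),u(\beta(x)))$ the function $u\colon\Sigma\to(-\tfrac12,\tfrac12)$ is $O(\e)$-small. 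Since $\beta$ is $O(\e)$-close to $\mathrm{id}$ it extends to a $(1+O(\e))$-bi-Lipschitz isotopy $(\beta_s)_{s\in[0,1]}$ of $\Sigma$ with $\beta_0=\mathrm{id}$ and $\beta_1=\beta$, and one sets, on $N$,
\[ F\big(\Psi(x,t)\big)=\Psi\big(\beta_{1-|t|}(x),\ (1-|t|)\,u\big(\beta_{1-|t|}(x)\big)+t\big),\]
which equals the identity on $\partial N=\Psi(\Sigma\times\{\pm1\})$, restricts on $\Sigma\times\{0\}$ to $f$, and---a short computation---is a $(1+O(\e))$-bi-Lipschitz homeomorphism of $N$. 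Extending by the identity on $\R^n\setminus N$ and applying a routine gluing lemma for bi-Lipschitz maps agreeing along the Lipschitz hypersurface $\partial N$ yields a bi-Lipschitz $F\colon\R^n\to\R^n$ with constant $L'=L'(\Sigma)$. One-sided $\Sigma$ is handled by the same formula on a twisted $[-1,1]$-bundle neighbourhood (or via the orientation double cover), and a $\Sigma$ contained in a hyperplane by the same graph argument with the Step~1 reduction performed inside its affine span.

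\emph{The codimension-zero case.} If $\dim\Sigma=n$, apply the previous case to $\partial\Sigma$ to get a bi-Lipschitz $G\colon\R^n\to\R^n$ with $G|_{\partial\Sigma}=f|_{\partial\Sigma}$; then $G^{-1}\circ f$ maps $\Sigma$ into $\R^n$, is $O(\e)$-close to $\iota_\Sigma$, and is the identity on $\partial\Sigma$. Gluing $G^{-1}\circ f$ on $\Sigma$ with the identity on $\R^n\setminus\Sigma$ (they agree on $\partial\Sigma$; if exact agreement of local data is wanted, taper within a fixed PL collar of $\partial\Sigma$) gives a bi-Lipschitz self-map $H$ of $\R^n$ with controlled constant, and $F:=A\circ G\circ H$ restricts to $f$ on $\Sigma$ and is $L'$-bi-Lipschitz with $L'=L'(\Sigma)$. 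With $L:=1+\e_0$ this proves the theorem.

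\emph{Main obstacle.} The real content sits in two places. First, the stability estimate must be quantified so that $\sup_\Sigma|f-A|\le C(\Sigma)\,\e$ with $C$ independent of $f$: one fixes the reference points of $\Sigma$ in advance and carries out the linear algebra turning ``$f$ nearly preserves the distances among $n+1$ affinely independent points'' into ``$f$ is $O(\e)$-close to a rigid motion.'' Second, and more seriously, one must produce the tubular chart $\Psi$ with bi-Lipschitz moduli depending only on $\Sigma$---exactly where PL (rather than topological or Lipschitz) structure is invoked, and the step that in a complete proof one borrows from PL/Lipschitz regular-neighbourhood and collaring theory. Everything downstream of $\Psi$ is the explicit interpolation formula above together with standard bi-Lipschitz gluing.
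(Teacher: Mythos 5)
First, a point of orientation: the paper does not prove this statement at all---it is imported verbatim as \cite[Corollary 5.20]{Vais:86} and used as a black box in Section \ref{sec:exten_proof}. So there is no in-paper argument to compare against; what you have written is an attempt to reprove V\"ais\"al\"a's extension theorem from scratch, and it has to be judged on its own.

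Your overall reduction (use that $L$ may be chosen close to $1$, normalize $f$ to be $C^0$-close to the inclusion via quantitative rigidity, then damp the perturbation out across a PL tubular neighbourhood) is the right general shape, and the codimension-zero step and the gluing lemmas are indeed routine modulo quasiconvexity. But there is a genuine gap at the step you dismiss in one clause: ``Since $\beta$ is $O(\e)$-close to $\mathrm{id}$ it extends to a $(1+O(\e))$-bi-Lipschitz isotopy $(\beta_s)$ \ldots with $\beta_0=\mathrm{id}$ and $\beta_1=\beta$.'' This is not a consequence of $C^0$-closeness. A self-map of a compact manifold that is $\e$-close to the identity in sup norm need not even be injective, and a bi-Lipschitz homeomorphism close to the identity is not obviously joined to the identity by a path of \emph{bi-Lipschitz} homeomorphisms with uniform constants. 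The statement you need is a quantitative local contractibility/deformation theorem for the group of bi-Lipschitz homeomorphisms of $\Sigma$ --- in the topological category this is \v{C}ernavski\u{\i}--Edwards--Kirby, and in the Lipschitz category it is due to Sullivan and Tukia--V\"ais\"al\"a and is deep (it is precisely the machinery on which V\"ais\"al\"a's own proof of Corollary 5.20 rests). Your Alexander-trick-style formula for $F$ on the collar genuinely requires such an isotopy (a homeomorphism of $\Sigma$ extends over $\Sigma\times[0,1]$ rel the far end only if it is isotopic to the identity, with bi-Lipschitz control only if the isotopy is controlled), so this step cannot be bypassed; as written, the proof assumes a result essentially equivalent in difficulty to the one being proved. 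Your ``main obstacle'' paragraph flags the tubular neighbourhood and the rigidity estimate but not this, which is the actual crux.

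Two smaller inaccuracies, fixable but worth recording. (i) The shadow map $\beta=\mathrm{pr}\circ\Psi^{-1}\circ f$ is \emph{not} $(1+O(\e))$-bi-Lipschitz: the estimate $|\beta(x)-\beta(x')|\leq(1+\e)|x-x'|+O(\e)$ degenerates at scales below $\e$, and the best one gets in general is a bi-Lipschitz constant of order $C(\Sigma)^2$ together with $\|\beta-\mathrm{id}\|_\infty=O(\e)$; even injectivity of $\beta$ requires an argument (degree theory plus the collar structure), not just closeness to the identity. This does not break the scheme, since $L'$ may depend on $\Sigma$, but the constants should be stated correctly. (ii) When $\Sigma$ spans only a hyperplane, multilateration from $n$ reference points controls the normal component of $f-A$ only to order $\sqrt{\e\cdot\diam\Sigma}$, not $O(\e)$, because distances to points of a hyperplane vary quadratically in the normal direction; again harmless for the conclusion but the claimed $O(\e)$ bound is not what the rigidity argument delivers there.
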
 

By Theorem \ref{thm:vais_ext}, for each $t\in[0,1]$, there are constants $L_t, L_t' >1$ such that any $L_t$-bi-Lipschitz map $f: \Sigma_t \to \R^{N+2}$ has an $L_t'$-bi-Lipschitz extension $F: \R^{N+2} \to \R^{N+2}$. For all $t \in [0,1]$, there is an open interval $\D_t$ such that $1+C|s-t|<L_t$ for all $s \in \D_t$. Cover $[0,1]$ with finitely many intervals $\{\D_{t_j}\}_{j=1}^l$, where $0 = t_0 < t_1 < \cdots < t_l = 1$ and $\D_{t_{j-1}}\cap \D_{t_j} \neq \emptyset$. For each $j=1,\dots,l$ set $a_{2j} = t_j$ and $a_{2j-1} \in \D_{t_{j-1}}\cap\D_{t_j}$. Then, each $g_{a_{j}a_{j+1}}$ extends to a bi-Lipschitz map $G_{a_{j}a_{j+1}}: \R^{N+2} \to \R^{N+2}$. Hence, $G_{a_{2l-1}a_{2l}}\circ \cdots \circ G_{a_0a_1}$ is a bi-Lipschitz self-map of $\widetilde{Q}$. The respective bi-Lipschitz maps on each $\widetilde{Q}$ can be pasted together and the resulting map is still bi-Lipschitz. 

Similarly we use a bi-Lipschitz map in $Q$ that maps $\kappa_{N+1}$ onto $\kappa_{N}$ satisfying all the properties of the previous bi-Lipschitz map. Inductively, we obtain a bi-Lipschitz map
\[ \Theta' : (Q,\kappa_{N+2}) \to  (Q,\kappa_{1})\]
such that $\Theta'$ is identity on $\partial Q$, maps each $\G_{N+2,u}$ on $\G_{1,u}$ and every block $Q_{N+2,m}$ in the core $\kappa_{N+2}$ is mapped to a block $\mathcal{T}^{N+2}(\ell,\frac{M-2}{M^2})$ where $\ell = \ell(m)$ is a straight line segment lying on $J_{1}$. Note that $J_1$ and all fibers $\G_{1,u}$ of $\kappa_1$ are straight line segments isometric to each other.

\emph{Step 2: We straighten the images of $\G_{N+2,u}$.} Consider the line segments 
\[ \Gamma'_{N+2,u}(m) = \Theta' (\Gamma_{N+2,u}\cap Q_{N+2,m})\]
and let $\Gamma'_{N+2,u} = \bigcup_{m=1}^{M_{N+2}}\Gamma'_{N+2,u}(m)$. The family $\{\Gamma'_{N+2,u}\}_{u\in \mathfrak{C}^{N+1} }$ is a fibration of $\kappa_1 = \kappa_0(Q)$ and if $u$ is a vertex of $\mathfrak{C}^{N+1}$ then $\Gamma'_{N+2,u}$ is an edge of $\kappa_1$. Let $\Theta'' : Q \to Q$ be a bi-Lipschitz mapping which is identity on $\partial Q$ and linear on each $\Gamma'_{N+2,u}(m)$. Moreover, for all $u,m$, $\Theta''(\Gamma'_{N+2,u}(m))$ lies on $\Gamma'_{N+2,u}$ and its length is $1/M$. Define now $\Theta_z^{\mathcal{F}_0} = (\Theta')^{-1}\circ(\Theta'')^{-1}\circ\Theta_0^{\mathcal{F}_0}$ and the proof is complete.

\bibliographystyle{abbrv}
\bibliography{biblio}
\end{document}